\documentclass[12pt]{amsart}
\usepackage{pdfsync}
\usepackage{graphicx,color}
\usepackage{amssymb}
\usepackage{amsmath}
\usepackage{amsthm}
\usepackage{enumerate}
\usepackage{subcaption}
\usepackage{esvect}
\usepackage[left=3cm,top=3.8cm,right=3cm]{geometry} 
\usepackage{ucs}
\usepackage{cite}
\usepackage{amsxtra}
\usepackage{epstopdf}
\usepackage{bbold}
\usepackage[utf8x]{inputenc}
\usepackage{verbatim}                                  
\usepackage[toc,page]{appendix}                        
\usepackage{pdfsync} 

\addtocontents{toc}{\protect\setcounter{tocdepth}{1}}

\newcommand{\dist}{\text{\rm dist}}

\DeclareMathOperator{\diam}{diam}
\DeclareMathOperator{\rad}{rad}

\newtheorem{theorem}{Theorem}
\newtheorem{definition}[theorem]{Definition}
\newtheorem{proposition}[theorem]{Proposition}
\newtheorem{corollary}[theorem]{Corollary}
\newtheorem{observation}[theorem]{Observation}
\newtheorem{lemma}[theorem]{Lemma}

\newtheorem{remark}[theorem]{Remark}

\newcommand{\R}{\mathbb{R}}
\def\Z{{\mathbb Z}}
\def\N{{\mathbb N}}

\makeatletter
\def\moverlay{\mathpalette\mov@rlay}
\def\mov@rlay#1#2{\leavevmode\vtop{%
   \baselineskip\z@skip \lineskiplimit-\maxdimen
   \ialign{\hfil$\m@th#1##$\hfil\cr#2\crcr}}}
\newcommand{\charfusion}[3][\mathord]{
    #1{\ifx#1\mathop\vphantom{#2}\fi
        \mathpalette\mov@rlay{#2\cr#3}
      }
    \ifx#1\mathop\expandafter\displaylimits\fi}
\makeatother



\definecolor{Azul}{rgb}{0.0, 0.0, 1.0}
\definecolor{Rojo}{rgb}{1.0, 0.03, 0.0}
\definecolor{Purpura}{rgb}{0.5,0,0.35}
\definecolor{BurntOrange}{cmyk}{0,0.51,1,0}
\definecolor{PineGreen}{cmyk}{0.92,0,0.59,0.250}
\definecolor{Violeta}{rgb}{0.39,0.17,0.63}
\definecolor{Violeta2}{rgb}{0.6, 0.4, 0.8}
\definecolor{Fucsia}{rgb}{1.0, 0.01, 0.24}
\definecolor{Rosa}{rgb}{0.63,0.17,0.39}
\definecolor{VerdeManzana}{rgb}{0.39,0.63,0.17}
\definecolor{Celeste}{rgb}{0.7,0.7,1}



\begin{document}
\thispagestyle{empty}

\title[intersections of thick compact sets of $\R^d$]{intersections of thick compact sets in $\R^d$}

\author{Kenneth Falconer}
\address{ School of Mathematics and Statistics\\
University of St. Andrews}
\email{kjf@st-andrews.ac.uk}

\author{Alexia Yavicoli}
\address{ School of Mathematics and Statistics, University of St. Andrews, UK.
Current address: Department of Mathematics, the University of British Columbia. 1984 Mathematics Road, Vancouver BC V6T 1Z2, Canada}
\email{yavicoli@math.ubc.ca, alexia.yavicoli@gmail.com}

\keywords{Thickness, Intersections, Patterns, Dimension, Schmidt games, Gap Lemma}
\subjclass{MSC 11B25, MSC 28A12, MSC 28A78, \and MSC 28A80}

\begin{abstract}
We introduce a definition of thickness in $\R^d$ and obtain a lower bound for the Hausdorff dimension of the intersection of finitely or countably many thick compact sets using a variant of Schmidt's game. As an application we prove that given any compact set in $\R^d$ with thickness $\tau$, there is a number $N(\tau)$ such that the set contains a translate of all sufficiently small similar copies of every set in $\R^d$  with at most $N(\tau)$ elements; indeed the set of such translations has positive Hausdorff dimension. We also prove a gap lemma related result and bounds relating Hausdorff dimension and thickness.
\end{abstract}


\maketitle

\tableofcontents


\section{Introduction.}

The classical co-dimension formula states that if $C_1,C_2$ are submanifolds of $\R^d$ that intersect transversally then
\begin{equation}\label{codim}
\dim (C_1 \cap C_2)=\dim(C_1)+\dim(C_2)-d
\end{equation}
provided the right hand side is non-negative, where $\dim$ denotes the dimension of the manifolds. There are various versions of \eqref{codim} that are applicable in other settings, in particular for more general sets  using Hausdorff dimension $\dim_H$.
For example, for compact sets $C_1,C_2\subset\R^d$
\begin{equation}\label{codim2}
\dim_H (C_1 \cap (C_2+x) )\leq \max \{0, \dim_H(C_1\times C_2)-d\}
\end{equation}
for Lebesgue almost-all $x\in \R^d$; the right-hand side can be replaced by $\max \{0, \dim_H(C_1)+\dim_H(C_2)-d\}$ if, for example, either  $C_1$ or $C_2$ has equal Hausdorff and upper box-counting dimension, see \cite{MatP}.
On the other hand, for all $\epsilon>0$,
\begin{equation}\label{codim3}
\dim_H (C_1 \cap \sigma(C_2))\geq \max \{0, \dim_H(C_1)+\dim_H(C_2)-d-\epsilon\}
\end{equation}
for a set of similarities $\sigma$ of positive measure with respect to the natural measure on the group of similarities $\sigma$ on $\R^d$. The similarity group may be replaced by the group of isometries if  $\dim_H C_1> (d+1)/2$ (it is not known if this condition is necessary if $d\geq 2$), see \cite{Kah, MatP}. The disadvantage of these results is that they are measure theoretic, and tell us nothing about which particular similarities or isometries these inequalities are valid for.

At the other extreme, there are classes $\mathcal{C}$ of `limsup sets' of Hausdorff dimension $0<s<d$ which are dense in $\R^d$  with the property that the intersection of any countable collection of similar copies of sets in $\mathcal{C}$ still has  Hausdorff dimension $s$, see for example \cite{Fal94}.

It is natural to ask for specific conditions on compact sets that are `close enough' to each other that guarantee non-empty intersection, or even give a lower bound for the dimension of their intersection. For subsets of the real line Newhouse \cite{Newhouse} introduced a notion of thickness, see Definition \ref{thickness},
which depends on the relative sizes of the complementary open intervals of the set and showed that two Cantor-like sets, with neither contained in a gap of the other, must intersect if the product of their thickness is greater than 1, see Theorem \ref{NGapLem}.

In this paper we propose a definition of thickness for compact subsets of $\R^d$ for all $d \geq 1$. We obtain a higher dimensional gap lemma result for some cases, and show that given several compact sets in $\R^d (d\geq 1)$ that are not too far apart in a sense that will be made precise, if their thicknesses are large enough then they have non-empty intersection, and we obtain a lower bound for the Hausdorff dimension of this intersection.

We first review the definition of thickness for subsets of the real line. Recall that every compact set $C$ on the real line can be constructed by starting with a closed interval $I\equiv I_1$ (the convex hull of $C$) and successively removing disjoint open complementary intervals (they are the path-connected components of the complement of $C$). Clearly there are finitely or countably many disjoint open complementary intervals $(G_n)_n$, which we may assume are ordered so that their lengths $|G_n|$ are non-increasing; if several intervals have the same length, we order them arbitrarily. The two unbounded path-connected components of $\mathbb{R}\setminus C$ are not included. For each $n \in \mathbb{N}$  the interval $G_{n}$ is a subset of some closed path-connected component $I_{n}$  of $I\setminus (G_1\cup\cdots\cup G_{n-1})$. We say  that such a $G_{n}$  is {\em removed}  from $I_{n}$.

\begin{definition}[Thickness in $\R$]\label{thickness}
Let $C \subset \R$ be compact with convex hull $I$, and let $(G_n)_n$ be the ordered sequence of open intervals comprising $I\setminus C$. Each $G_n$ is removed from a closed interval $I_n$, leaving behind two closed intervals $L_n$ and $R_n$; the left and right intervals of $I_n \setminus G_n$.
The {\em thickness} of $C \subset \R$ is defined as
\[
\tau (C):= \inf_{n \in \N} \frac{\min \{ |L_n| , |R_n| \}}{|G_n|}.
\]
The sequence of complementary intervals $(G_n)_n$ may be finite, in which case the infimum is taken over the finite set of indices.

The thickness of a single point is taken to be  $0$, and that of a non-degenerate interval to be $+\infty$.
\end{definition}
If there are several complementary intervals of equal length, then the ordering of them does not affect the value of $\tau(C)$. See \cite{HKY93,Astels,PT93,AY} for more information on Newhouse thickness and alternative definitions.

\begin{theorem}[Newhouse's Gap Lemma]\label{NGapLem}
Given two compact sets $C_1, C_2 \subset \R$, such that neither set lies in a gap of the other, if \textbf{$\tau(C_1) \tau(C_2) > 1$} then
\[
C_1 \cap C_2 \neq \emptyset.
\]
\end{theorem}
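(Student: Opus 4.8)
The plan is a proof by contradiction together with an iterated ``interval‑chasing'' argument in the spirit of Newhouse. Suppose $C_1\cap C_2=\emptyset$; I will deduce $\tau(C_1)\tau(C_2)\leq 1$. Write $\tau_i:=\tau(C_i)$, and read the hypothesis ``$C_i$ does not lie in a gap of $C_j$'' as: $C_i$ meets at least two connected components of $\R\setminus C_j$, understanding, as is standard in this context, that the two unbounded complementary intervals also count as gaps. Recall that each bounded gap $G$ of $C_i$ is removed from a bridge $B$, leaving closed side‑bridges of lengths $\geq\tau_i|G|$; since $\tau_1\tau_2>1$ forces $\tau_i>0$, every bridge of positive length contains points of $C_i$ in its interior, and every bounded gap lying inside a bridge $B$ is removed from a sub‑bridge of $B$. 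Call a bounded gap $U$ of $C_1$ and a bounded gap $V$ of $C_2$ \emph{linked} if they cross, i.e.\ $U\cap V\neq\emptyset$ but neither contains the other; since $C_1\cap C_2=\emptyset$, in that case each of $U,V$ contains exactly one interior endpoint of the other. \emph{Step 1} is to produce a linked pair: the convex hulls $I_1=[a_1,b_1]$ and $I_2=[a_2,b_2]$ must intersect (otherwise one $C_i$ lies in an unbounded component of $\R\setminus C_j$), and a short case analysis on their relative position — using $a_1=\min C_1\notin C_2$, $b_1=\max C_1\notin C_2$, and symmetrically, together with the hypothesis that $C_2$ is not contained in a single gap of $C_1$ — yields a linked pair $(U_0,V_0)$.

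\emph{Step 2} is the engine. Given a linked pair $(U,V)$, say with $v_1<u_1<v_2<u_2$ (so $u_1\in V$ and $v_2\in U$), let $R_V=[v_2,w]$ be the bridge of $C_2$ immediately to the right of $V$ and $L_U=[w',u_1]$ the bridge of $C_1$ immediately to the left of $U$; then $|R_V|\geq\tau_2|V|$ and $|L_U|\geq\tau_1|U|$. If both $w\leq u_2$ and $w'\geq v_1$ held, then (ruling out equality at the endpoints, which would force a point of $C_1\cap C_2$) we would have $R_V\subseteq U$ and $L_U\subseteq V$, hence $\tau_2|V|\leq|R_V|<|U|$ and $\tau_1|U|\leq|L_U|<|V|$; multiplying gives $\tau_1\tau_2<1$. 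So $w>u_2$ or $w'<v_1$. In the first case $u_2\in C_1\setminus C_2$ lies in the interior of $R_V$, hence in a bounded gap $V'$ of $C_2$ with $V'\subseteq R_V$, and $(U,V')$ is again linked (now with $u_1<v'_1<u_2<v'_2$); the case $w'<v_1$ is symmetric and produces $(U',V)$ with $U'\subseteq L_U$.

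\emph{Step 3} is to iterate and pass to the limit. Applying Step 2 repeatedly to $(U_0,V_0)$ produces an infinite sequence of linked pairs. The crucial point is that Step 2 is self‑reinforcing: once a pair is produced through the ``$w>u_2$'' alternative, the new gap sits inside the bridge $R_V$, so at the next application the competing bridge is forced to lie inside $R_V$ and cannot stick out — whence from then on only that alternative occurs, and one obtains nested sequences of bridges $R_V\supseteq R_{V'}\supseteq\cdots$ of $C_2$ and $R_U\supseteq R_{U'}\supseteq\cdots$ of $C_1$. The successive $C_1$‑gaps are pairwise disjoint inside the bounded set $R_U$, so their lengths tend to $0$ and their positions are squeezed to a single point $p$, which is a limit of endpoints of gaps of $C_1$, hence $p\in C_1$; symmetrically the $C_2$‑gaps shrink to a point $q\in C_2$. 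Since corresponding pairs stay linked, their gaps always overlap, so $p=q\in C_1\cap C_2$, contradicting $C_1\cap C_2=\emptyset$.

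The main obstacle is Step 3: one must check that the descent is genuinely trapped in one of the two alternatives, so that the bridges really do nest, and then that the nested gaps collapse to a point rather than to a nondegenerate interval (the quantitative shrinking itself is soft — summability of disjoint intervals in a bounded set — but keeping track of which endpoint lands in which gap, and the open/closed boundary cases that invoke $C_1\cap C_2=\emptyset$, needs careful bookkeeping). Step 1 is routine but, as usual, splits into the case where the convex hulls genuinely cross and the case where one is contained in the other.
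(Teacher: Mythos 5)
The paper states Newhouse's Gap Lemma without proof (it is cited to Newhouse); the closest argument actually carried out in the paper is the proof of Proposition~\ref{UglyGapLemma}, the $\R^d$ gap lemma for linked compact sets. Your proof is correct, and it is the same strategy specialised to $\R$: assume disjointness, produce an initial linked pair of bounded gaps, show via the thickness hypothesis that at each stage at least one of two alternatives must occur (since if both failed the bridge comparisons $\tau_2|V|\le|R_V|<|U|$ and $\tau_1|U|\le|L_U|<|V|$ would give $\tau_1\tau_2<1$), iterate, and extract a common limit point of $C_1$ and $C_2$. Where you genuinely depart is the bookkeeping in Step~3. The paper's argument keeps both alternatives alive and only shows that at least one index sequence is strictly increasing, combining this with the observation that gap diameters tend to $0$ to conclude that one diameter sequence shrinks. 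You instead prove the sharper trapping claim that once one alternative fires, the competing bridge is forever confined inside the previous bridge (since a later gap's ancestor interval, and hence its opposite bridge, is contained in the bridge it was born inside), so the process is locked into a single direction and \emph{both} diameter sequences shrink monotonically in nested bridges. This is special to the total order on $\R$ and is a genuine simplification; the paper's index-increase argument is what survives in $\R^d$, where there is no one-sided confinement.

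One small caveat: the closing phrase that the nested gaps ``are squeezed to a single point $p$'' is not quite right as stated, since the nested bridges could a priori have nondegenerate intersection. What the argument actually needs (and what Proposition~\ref{UglyGapLemma}'s proof does) is to pick $x_k\in\partial U_k\subset C_1$ and $y_k\in U_k\cap\partial V_k\subset C_2$, note $|x_k-y_k|\le\diam U_k\to 0$, pass to a subsequence $x_{k_j}\to x\in C_1$ by compactness, and conclude $y_{k_j}\to x\in C_2$ as well. This replaces the squeezing claim with no loss. The quantitative dichotomy you use (bridges versus gap lengths) is also stated differently from the paper's (distance from a gap to the union of earlier gaps and $E$, versus the other gap's diameter), but on $\R$ the minimum of $|L_n|,|R_n|$ equals that distance, so this is only a cosmetic difference.
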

Theorem \ref{NGapLem} was proved only for subsets of  $\R$ and it does not guarantee positive Hausdorff dimension of the intersection, nor does it generalise in any simple way to intersections of $3$ or more sets.

Here we give a definition of thickness for compact subsets of $\R^d$ that enables us to generalize Theorem \ref{NGapLem} to higher dimensions, and also obtain lower bounds for the Hausdorff dimension of the intersection of several sets. For a different definition of thickness for certain dynamically defined subsets of the complex plane  see \cite{Biebler}.

Our setting throughout the paper is as follows. Given a compact subset $C$ of $\R^d$, we define $(G_n)_{n=1}^\infty$ to be the (at most) countably many open bounded path-connected components of $C^C$  and $E$ to be the unbounded open path-connected component of $C^C$ (except when $d=1$ when $E$ consists of two unbounded intervals). We  call $E$ together with $G_n\ (n \in \N)$  the {\em gaps} of $C$. We may assume that the sequence of gaps $(G_n)_{n=1}^\infty$ is ordered by non-increasing diameter. Note that we make no assumption about the connectedness or simply connectedness of $C$.

We write $\dist$ for the usual distance in $\R^d$.

\begin{definition}[Thickness in $\R^d$]\label{thicknessRd}
We define the {\em thickness} of $C$ to be
\[\tau(C):=\inf_{n \in \N} \frac{\dist (G_n, \bigcup_{1\leq i \leq n-1}G_i \cup E)}{\diam (G_n)},\]
provided that $E$ is not the only path-connected component of $C$.

When the only complementary path-connected component is $E$, we define
\begin{equation}
\tau (C):= \left\{ \begin{array}{lcc}
             +\infty &   \text{if}  & C^{\circ} \neq \emptyset \\
             \\ 0 &  \text{if}  & C^{\circ} = \emptyset
             \end{array} \right .
 \end{equation}

We say $C$ is {\em thick} if  $\tau (C) >0$.
\end{definition}

If the sequence of complementary intervals $(G_n)_n$ is finite then the infimum is taken over the finite set of indices.
Moreover, thickness is well-defined in the sense that if two gaps have the same diameter, interchanging their positions in the ordering does not change the definition of thickness.

Note that $\tau \in [0, +\infty]$. Also, $\tau$ is invariant under homothetic maps, and agrees with the usual definition of thickness in the real line (recall Definition \ref{thickness}).

\begin{observation}
If $C\subset \R^d$ is a thick compact set, then either there are finitely many gaps  $(G_n)_n$ or $\lim_{n \to \infty} \diam G_n=0$.
 To see this we can assume that $E$ is not the only complementary path-connected component. If   $\diam G_n\geq c>0$ for infinitely many $n$, taking points $x_n \in G_n$ with $|x_n-x_i| \geq c\tau(C)$ for $1\leq i <n$ contradicts the sequential compactness of $E^C$.
\end{observation}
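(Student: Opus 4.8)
The plan is a proof by contradiction that exploits the fact that a positive thickness forces distinct gaps to be separated by distances proportional to their diameters. First I would dispose of the degenerate case: if $E$ is the only complementary path-connected component of $C$, then the sequence $(G_n)_n$ is empty and the assertion is vacuous. So assume there is at least one gap, set $\tau:=\tau(C)$, and recall that $\tau>0$ because $C$ is thick. Suppose, for a contradiction, that $(G_n)_n$ is infinite. Since the gaps are listed in order of non-increasing diameter, $\bigl(\diam G_n\bigr)_n$ is a non-increasing sequence of positive numbers and hence convergent; if its limit were $0$ we would be done, so assume it equals some $c>0$, whence $\diam G_n\geq c$ for every $n$.

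The key step is to produce a uniformly separated infinite subset of the compact set $E^C$. For each $n$ choose a point $x_n\in G_n$, which is possible since $G_n$ is a non-empty open set. If $m<n$, then $G_m\subseteq\bigcup_{1\le i\le n-1}G_i\cup E$, so
\[
\dist(x_n,x_m)\ \geq\ \dist\!\left(x_n,\ \textstyle\bigcup_{1\le i\le n-1}G_i\cup E\right)\ \geq\ \dist\!\left(G_n,\ \textstyle\bigcup_{1\le i\le n-1}G_i\cup E\right)\ \geq\ \tau\,\diam G_n\ \geq\ \tau c ,
\]
where the third inequality is precisely the definition of thickness. Hence every two distinct points of $\{x_n:n\in\N\}$ are at distance at least $\tau c>0$.

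To close the argument I would verify that $E^C$ is compact. It is closed, being the complement of the open set $E$; and it is bounded, since, choosing $R>0$ with $C\subseteq B(0,R)$, the set $\{x:|x|>R\}$ is disjoint from $C$ and (for $d\ge 2$) connected, hence contained in the unbounded component $E$, so that $E^C\subseteq\overline{B(0,R)}$ — and the same inclusion holds when $d=1$, with $E$ the pair of unbounded rays. Since each $x_n$ lies in $G_n\subseteq E^C$, the sequence $(x_n)_n$ lies in the sequentially compact set $E^C$ and yet has no convergent subsequence, a contradiction. Therefore $\lim_{n\to\infty}\diam G_n=0$.

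I do not anticipate a genuine obstacle here: the substance is just the remark that thickness controls the distances between gaps, and the only points requiring a little care are organising the chain of distance inequalities so that the thickness hypothesis is applied to the correct index, and checking that $E^C$ is both closed and bounded so that sequential compactness may be invoked.
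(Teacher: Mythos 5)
Your proof is correct and follows essentially the same route as the paper's sketch: choose a point in each gap, use the thickness definition to show these points are uniformly separated, and derive a contradiction with the compactness of $E^C$. The only difference is that you have filled in the details the paper leaves to the reader — the monotonicity argument giving $\diam G_n \geq c$ for all $n$ rather than infinitely many, the careful ordering of the distance inequalities, and the verification that $E^C$ is closed and bounded — which is exactly what a fully worked-out version of the paper's remark would contain.
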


In Section \ref{SectionGapLemma}, we obtain a higher dimensional gap lemma related result. The gap lemma does not generalize in any simple way to intersections of three or more sets, so we need to use other methods to study such intersections. To achieve this we obtain lower bounds for the Hausdorff dimension of the intersection of several thick compact sets in terms of their thicknessess, which is easy to estimate in many cases.

Our main theorem, Theorem \ref{THEOREMintersection} which will follow from Theorem \ref{TEOlowerbound}  which relates thickness to `winning sets'.

The following constants appear in many of our results:
\begin{definition}\label{DefConstants}
In $\R^d\, (d\geq 1)$, let
\begin{equation}\label{knumbers}
K_1:=\frac{2d (24\sqrt{d})^d  \log (16\sqrt{d})}{1-\frac{1}{2^d}} \quad \text{ and } \quad  K_2:=\bigg( \frac{(24 \sqrt{d})^d (1+4^d 2)}{1-\frac{1}{2^d}} \bigg)^2.
\end{equation}
\end{definition}

We now state our main theorem which will follow from applying Theorem \ref{TEOlowerbound}  on  `winning sets' to thickness. We write $E_i$ for the unbounded open path-connected component of $C_i^C$ (the union of two unbounded intervals when $d=1$).

\begin{theorem}[Intersection of compact sets in $\R^d$]\label{THEOREMintersection}
Let $(C_i)_i$ be a family of countably many compact sets in $\R^d$, where $C_i$ has thickness $\tau_i>0$,  such that:
\begin{enumerate}[(i)]
\item $\sup_i \diam (C_i) <+\infty$,
\item there is a ball $B$ such that $B \cap E_i =\emptyset$ for every $i$, where $E_i$ is the unbounded component of $C_i^C$,
\item there exists $c \in (0,d)$ such that
$$\sum_i \tau_i^{-c} \leq \frac{1}{K_2}\beta^c (1-\beta^{d-c})$$
 where
   \[\beta:=\min \Big\{\frac{1}{4}, \frac{\diam (B)}{\sup_i \diam(C_i)}\Big\}.\]
\end{enumerate}
Then \[\dim_H \Big(B \cap \bigcap_i C_i\Big)\ \geq\ d- K_1 \frac{\left( \sum_i \tau_i^{-c}\right)^{d/c}}{\beta^d |\log (\beta)|}>0.\]
\end{theorem}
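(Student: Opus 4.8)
The plan is to deduce Theorem~\ref{THEOREMintersection} from the (not-yet-stated) Theorem~\ref{TEOlowerbound}, which I expect to say: if $(C_i)_i$ is a countable family of ``$(\alpha_i,\beta)$-winning'' sets for a suitable Schmidt-type game (with the $\alpha_i$ quantifying how winning they are), then $\bigcap_i C_i$ is nonempty and in fact has Hausdorff dimension at least $d - K_1 \tfrac{\sum_i \alpha_i^{\,d}}{\beta^d|\log\beta|}$ or something of this shape, provided $\sum_i\alpha_i^{\,d}$ is small compared with $\beta^{?}(1-\beta^{?})$. So the real content of \emph{this} proof is the passage \textbf{thickness $\Rightarrow$ winning}: I must show that a compact set $C$ of thickness $\tau$, when restricted to the ball $B$ on which it ``fills space'' (hypothesis (ii)), is a winning set in the relevant game with a winning parameter controlled by a negative power of $\tau$, roughly $\alpha \asymp \tau^{-c/d}$ up to the constants $(24\sqrt d)^d$ etc.

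First I would set up the game precisely (as it will be defined before Theorem~\ref{TEOlowerbound}): Bob picks a ball, Alice responds with a sub-ball of a fixed ratio, Bob shrinks again, and so on; the target set is winning if Alice can always force the nested intersection into it. The key geometric lemma to prove is that inside any ball $B'$ that meets $C$ but is small relative to $\diam(C)$ and disjoint from $E$, one can find a sub-ball of comparable radius that avoids all the gaps $G_n$ of diameter $\gtrsim$ (radius of $B'$), \emph{using only the definition of $\tau$}: because $\dist(G_n,\bigcup_{i<n}G_i\cup E)\ge \tau\,\diam(G_n)$, the ``large'' gaps are sparse and well-separated on the scale of $B'$, so a packing/volume count in $\R^d$ shows a definite fraction of $B'$ is at distance $\ge$ (something like $\beta\,\mathrm{rad}(B')$) from every such gap, and that region still meets $C$ (this is where disjointness from $E$ is used — we never ``fall off the edge'' of $C$). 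Iterating this over scales, and handling gaps of all sizes by the $\sum$-type bound, shows Alice can steer into $C\cap B$. The ratio $\beta := \min\{1/4, \diam(B)/\sup_i\diam(C_i)\}$ is exactly the scale at which every $C_i$ looks like a ``thick'' set inside $B$, so it must be the game parameter; hypothesis (i) guarantees $\beta>0$ uniformly.

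The hardest step, I expect, is the quantitative \textbf{covering/avoidance estimate} that converts $\tau$ into a winning parameter with the \emph{right} power and with explicit constants matching $K_1,K_2$ — i.e.\ showing $\alpha_i \lesssim C_d\,\tau_i^{-c/d}$ (so that $\sum_i\alpha_i^{\,d}\lesssim C_d^{\,d}\sum_i\tau_i^{-c}$) uniformly over the scales of the game. This requires, at each stage, bounding the number of gaps of a given dyadic size that can intrude into a given ball by their separation property, then a summation over dyadic scales that produces the geometric factor $1/(1-2^{-d})$ appearing in $K_1,K_2$, and a $\sqrt d$-loss from passing between cubes and balls (the $24\sqrt d$ factors). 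One also has to check the game is ``played finitely often at each scale'' so that the at-most-countable family can be handled by a single diagonal/Borel–Cantelli-type argument — each $C_i$ only constrains the play once the current ball has shrunk below $\beta\diam(C_i)$, and condition (i) makes this happen after boundedly many rounds per set.

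Finally, with ``$C_i\cap B$ is $(\alpha_i,\beta)$-winning'' established for $\alpha_i \asymp \tau_i^{-c/d}$, I would simply invoke Theorem~\ref{TEOlowerbound}: hypothesis (iii), $\sum_i\tau_i^{-c}\le \tfrac1{K_2}\beta^c(1-\beta^{d-c})$, is precisely the smallness hypothesis of that theorem after substituting $\alpha_i^{\,d}\asymp\tau_i^{-c}$ and absorbing constants into $K_2$; and its conclusion, $\dim_H(\bigcap_i(C_i\cap B))\ge d-K_1\tfrac{(\sum_i\tau_i^{-c})^{d/c}}{\beta^d|\log\beta|}$, is exactly the stated bound. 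The strict positivity ``$>0$'' then follows because (iii) forces $\sum_i\tau_i^{-c}$ small enough that $K_1\tfrac{(\sum_i\tau_i^{-c})^{d/c}}{\beta^d|\log\beta|}<d$. Care is needed only to ensure the constant bookkeeping (the $d/c$ exponent, the $\beta$-powers) lines up between the winning-sets theorem and the thickness substitution; everything else is routine once the avoidance lemma is in hand.
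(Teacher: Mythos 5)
Your high-level plan is the same as the paper's: show that thickness forces membership in a Schmidt-type winning family, take a countable intersection of winning sets, and then invoke Theorem~\ref{TEOlowerbound} to get a dimension bound. But the way you fill in the key step (thickness $\Rightarrow$ winning) does not match what actually works, and there is a concrete error in the parameter bookkeeping.

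First, the game in the paper is not a classical Schmidt game where Alice steers a sub-ball into $C$. It is a potential-type game in which Bob plays a nested sequence of shrinking balls and Alice \emph{erases} a collection of open sets whose $c$-sum of diameters is at most $(\alpha\rho_m)^c$; Alice wins if the outcome $x_\infty$ survives all erasures and lands in $S$. With this game, the thickness-to-winning step (Proposition~\ref{compact winning}) is completely elementary and needs no packing or volume counts: when Bob's ball $B_m$ first becomes small enough to be close to only one new gap $G_n$, Alice erases $G_n$, and the $\beta$-shrinkage rule together with the definition of thickness gives directly $\diam G_n \le \frac{1}{\tau\beta}\diam(B_{m_n})$. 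So $S_i := C_i\cup E_i$ is $\bigl(\tfrac{1}{\tau_i\beta},\beta,0,\tfrac{\beta}{2}\diam C_i\bigr)$-winning, hence (by monotonicity) $(\tfrac{1}{\tau_i\beta},\beta,c,\rho)$-winning for any $c\ge 0$ and $\rho\ge\tfrac{\beta}{2}\diam C_i$. The geometric covering estimates you anticipate (the $(24\sqrt d)^d$ factors, the $1/(1-2^{-d})$, etc.) all live inside the proof of Theorem~\ref{TEOlowerbound} in the appendix, not in this deduction.

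Second, and more importantly, your proposed parameter relationship $\alpha_i\asymp\tau_i^{-c/d}$, with the combination $\sum_i\alpha_i^d\lesssim\sum_i\tau_i^{-c}$, is wrong. The correct relation is $\alpha_i=\tfrac{1}{\tau_i\beta}\asymp\tau_i^{-1}$, and the countable intersection property combines winning parameters via $\alpha^c=\sum_i\alpha_i^c$, so that $\alpha=\tfrac{1}{\beta}\bigl(\sum_i\tau_i^{-c}\bigr)^{1/c}$. It is then $\alpha^d=\beta^{-d}\bigl(\sum_i\tau_i^{-c}\bigr)^{d/c}$ that appears in the dimension bound, which is exactly how the exponent $d/c$ in the theorem's conclusion arises. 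Under your bookkeeping this exponent would not come out, and the hypothesis (iii) would not translate into the required smallness condition $\alpha^c\le\tfrac{1}{K_2}(1-\beta^{d-c})$. Finally, the role of hypothesis (ii) in the actual proof is simpler than the ``never fall off the edge'' picture you sketch: since $B\cap E_i=\emptyset$, one has $S_i\cap B\subseteq C_i\cap B$, so the winning set $S=\bigcap_i S_i$ intersected with $B$ sits inside $B\cap\bigcap_i C_i$, and one just applies Theorem~\ref{TEOlowerbound} to $S\cap B$ with $B_0:=B$ (noting $\rad(B)\ge\rho:=\tfrac{\beta}{2}\sup_i\diam C_i$).
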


Note that condition (iii) comes from  Theorem \ref{TEOlowerbound} and is needed  both to obtain the lower bound for the dimension of the intersection and to ensure that this bound is positive.

The significance of Theorem \ref{THEOREMintersection} is that  a condition on thicknesses can give a lower bound for the dimension of intersection  of a finite or countable collection of sets in $\R^d$  so ensure that the intersection is non-empty. In practice, the thicknesses needed are rather large as a consequence of the large constants  $K_1$ and $K_2$.

A very active research area involves finding conditions on a set that guarantees the set contains  homothetic copies of  a given finite set of points, called a {\em pattern} in this context. It will follow from Theorem \ref{THEOREMintersection} that a set contains homothetic copies of any given pattern in $\R^d$ provided it is sufficiently thick.
Patterns and intersections are related: the set $C$ contains a homothetic copy of $A:=\{a_1, \ldots, a_n\}$ if and only if there exists $\lambda \neq 0$ such that $\bigcap_{1 \leq i \leq n}(C-\lambda a_i)\neq \emptyset$.

A consequence of the Lebesgue density theorem is that any set $E\subset\R^d$ of positive Lebesgue measure contains a homothetic copy of every finite set at all sufficiently small scales, so it is natural to seek conditions on sets of zero Lebesgue measure form which this remains true.
Perhaps the most natural notion of size to consider is Hausdorff dimension but there are constructions (see for example \cite{Kel2, Kel1, Mat, AY17, DMT, UMAY}) which indicate that Hausdorff dimension cannot, in itself, detect the presence or absence of patterns in sets of Lebesgue measure zero, even in the most basic case of points in arithmetic progressions.

{\L}aba and Pramanik \cite{LP09} showed that if, in addition to having large Hausdorff dimension, a subset of $\R$ supports a probability measure with appropriate Fourier decay, then it contains arithmetic progressions of length $3$. The hypotheses were relaxed and the family of patterns covered greatly enlarged in subsequent papers \cite{HLP16, CLP16, FGP19}. This work uses harmonic analysis, and such methods do not work easily for longer arithmetic progressions. Moreover, the hypotheses may be difficult to check, and are not even known to hold for natural classes of fractals such as central self-similar Cantor sets.

Yavicoli \cite{AY} showed that Newhouse thickness, Definition \ref{thickness},
 allows the detection of homothetic and more general copies of patterns inside fractal sets in the real line. Newhouse thickness is  easy to compute or estimate for many classical fractal sets such as self-similar sets or sets defined in terms of continued fraction coefficients.
Our notion of thickness in higher dimensions, Definition \ref {thicknessRd}, enables such results to be extended to $\R^d$.

\begin{theorem}\label{TeoPatterns}
Let $C\subset\mathbb{R}^d$ be a compact set with thickness $\tau:=\tau(C)$, such that $E^C$ contains a ball $B$. Then $C$ contains a homothetic copy of every set $A$ with at most
\begin{equation}\label{ntau}
N(\tau):= \left\lfloor \frac{\beta^d  |\log(\beta)|}{e K_2} \frac{\tau^d}{\log(\tau)} \right\rfloor
\end{equation}
elements, where \[\beta:=\min \Big\{\frac{1}{4}, \frac{15 \diam (B)}{16\diam (C)}\Big\}.\]
and $K_2$ is as in \eqref{knumbers}.

Moreover, for all $\lambda \in \big(0,\frac{\diam (B)}{16\diam (A)}\big)$, there exists a set $X$ of positive Hausdorff dimension (depending on $A$, $B$, $C$ and $\lambda$) such that
\[x+\lambda A \subseteq C  \mbox{ for all } x\in X.\]
\end{theorem}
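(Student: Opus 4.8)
The plan is to deduce the theorem from Theorem~\ref{THEOREMintersection}, applied to a finite family of translates of $C$, via the correspondence between patterns and intersections recalled above. Fix $A=\{a_1,\dots,a_n\}$ with $2\le n\le N(\tau)$ (the case $n\le1$ being trivial) and a scale $\lambda\in\big(0,\diam(B)/(16\diam(A))\big)$. Since translating $A$ does not change its family of homothetic copies, I would first replace $A$ by a translate whose smallest enclosing ball is centred at the origin; then
\[
X:=\{x\in\R^d:\ x+\lambda A\subseteq C\}\ =\ \bigcap_{i=1}^{n}\big(C-\lambda a_i\big),
\]
so it suffices to bound $\dim_H$ of this intersection from below. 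Write $C_i:=C-\lambda a_i$: each $C_i$ is a translate of $C$, so $\tau(C_i)=\tau$ (thickness is translation invariant), $\diam(C_i)=\diam(C)$, and the unbounded component of $C_i^{C}$ is $E_i=E-\lambda a_i$.

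I would then check the three hypotheses of Theorem~\ref{THEOREMintersection} for the family $(C_i)_{i=1}^n$. Hypothesis~(i) is immediate. For (ii), take $B'$ concentric with $B$ whose radius is that of $B$ minus $\lambda$ times the circumradius of $A$: then $B'+\lambda a_i\subseteq B\subseteq E^C$, so $B'\cap E_i=\emptyset$ for every $i$, and a routine estimate using the bound on $\lambda$ shows that $\diam(B')$ is at least the fraction of $\diam(B)$ appearing in the statement. Since $b\mapsto b^{d}|\log b|$ is increasing on $(0,1/4]$, it follows that the constant $\min\{1/4,\diam(B')/\diam(C)\}$ produced by Theorem~\ref{THEOREMintersection} dominates the $\beta$ of the statement, which is all that is needed below.

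Hypothesis~(iii) is the crux. Here I would make the explicit (not optimal) choice $c:=d-1/\log\tau$, which lies in $(0,d)$ once $\tau$ is large enough that $\log\tau>1/d$ — the only regime in which the bound $N(\tau)$ is of interest. With this $c$ one has $\tau^{-c}=e\,\tau^{-d}$ and $\beta^{\,d-c}=e^{-|\log\beta|/\log\tau}$, and since the $\tau_i$ are all equal to $\tau$,
\[
\sum_{i=1}^{n}\tau_i^{-c}= n\,e\,\tau^{-d}\ \le\ N(\tau)\,e\,\tau^{-d}\ \le\ \frac{\beta^{d}|\log\beta|}{K_2\log\tau}\ \le\ \frac{\beta^{d}}{K_2}\big(e^{|\log\beta|/\log\tau}-1\big)\ =\ \frac{1}{K_2}\,\beta^{c}\big(1-\beta^{\,d-c}\big),
\]
using the definition of $N(\tau)$ for the middle inequality and the elementary inequality $e^{t}-1\ge t$ for the next one; it is precisely this inequality, together with the factor $e=\tau^{1/\log\tau}$, that makes the threshold $N(\tau)$ come out in the stated closed form. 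Thus (iii) holds, and Theorem~\ref{THEOREMintersection} yields
\[
\dim_H\Big(B'\cap\bigcap_{i}C_i\Big)\ \ge\ d-K_1\,\frac{\big(\sum_{i}\tau_i^{-c}\big)^{d/c}}{\beta^{d}|\log\beta|}\ >\ 0 .
\]
Since $B'\cap\bigcap_iC_i\subseteq\bigcap_i(C-\lambda a_i)=X$, the set $X$ (equivalently $B'\cap\bigcap_iC_i$) has positive Hausdorff dimension, which is the last assertion; and any point $x$ of this set satisfies $x+\lambda A\subseteq C$, so $C$ contains a homothetic copy of $A$. This holds for every admissible $\lambda$, giving the first assertion.

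Given Theorem~\ref{THEOREMintersection} as a black box there is no deep obstacle; the substantive points are (a) recognising that the pattern problem is exactly an intersection of translates of $C$, all of thickness $\tau$; (b) the choice $c=d-1/\log\tau$, which — rather than the exact optimiser over $c\in(0,d)$ — makes $N(\tau)$ appear via $e^{t}-1\ge t$; and (c) the constant-chasing in the ball-shrinking step, which I expect to be the fiddliest part, the circumradius estimate being sharpest when $d=1$.
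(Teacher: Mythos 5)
Your argument is essentially the paper's own: reduce the pattern problem to Theorem~\ref{THEOREMintersection} applied to the translates $C_i:=C-\lambda a_i$ (each of thickness $\tau$, diameter $\diam(C)$), shrink the ball for hypothesis~(ii), and choose $c$ close to $d$ so that the factor $e$ materialises in the bound. The only points of divergence are cosmetic. You take $c=d-1/\log\tau$ and close via $e^t-1\ge t$; the paper takes $c=d-1/\log(\tau\beta)$, so that $\alpha^c=e\alpha^d$ with $\alpha=1/(\tau\beta)$, and instead appeals to the fact that $f(\tau)=\log\tau\,(1-\beta^{1/\log(\tau\beta)})$ is decreasing with limit $|\log\beta|$. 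The two routes give the identical $N(\tau)$, and your use of $e^t-1\ge t$ is a touch cleaner. Your observation that $b\mapsto b^d|\log b|$ is increasing on $(0,1/4]$ (equivalently on $(0,e^{-1/d})$) correctly lets you pass from the $\beta$ produced by the shrunk ball to the $\beta$ of the statement.

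The one place you should press harder is exactly the ``routine estimate'' you yourself flag in hypothesis~(ii). Centring $\lambda A$ at the circumcentre and using Jung's theorem gives $\max_i|\lambda a_i|\le\lambda\diam(A)\sqrt{d/(2(d+1))}$, hence $\diam(B')\ge\diam(B)\bigl(1-\tfrac{1}{8}\sqrt{d/(2(d+1))}\bigr)$, which equals $\tfrac{15}{16}\diam(B)$ only for $d=1$ and falls strictly short for $d\ge2$. In fact the paper's own proof has the same problem: it normalises $b_1=0$, so $\max_i|b_i|<\diam(B)/16$, which only yields $\diam(\widetilde B)\ge\tfrac{7}{8}\diam(B)$, though the paper writes $\tfrac{15}{16}\diam(B)$. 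So your worry is well founded: the constant $\tfrac{15}{16}$ in the statement should really be $\tfrac{7}{8}$ (paper's centring) or the $d$-dependent Jung constant (yours, which is sharper but still below $\tfrac{15}{16}$ for $d\ge2$). This is a slip in the constant only, and does not affect the structure of your argument or the order of magnitude of $N(\tau)$, but it does mean the ``routine estimate'' cannot be discharged as stated.
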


We also discuss the relationship between Hausdorff dimension and thickness of a set. It is shown in \cite[p.77]{PT93} that for $C\subset \R$,
\begin{equation}\label{TauDimBook}
\dim_H (C)\geq \frac{\log 2}{\log(2+1/\tau (C))},
\end{equation}
and in Section \ref{DimTau} we obtain analogous lower bounds for $C\subset\R^d$.

\section{A Gap Lemma related result in $\R^d$}\label{SectionGapLemma}

In this section we extend Theorem \ref{NGapLem}, Newhouse's gap lemma on $\R$,  to certain cases in $\R^d$ for $d\geq 2$.
We study a particular case when the gaps are either linked or  do not intersect; in this setting we  can use an analogous  argument to Newhouse's proof.

We denote the boundary of $U \subset \R^d$ by $\partial U$.

\begin{definition}
We say that $U \subseteq \R^d$ and $V\subseteq \R^d$ are {\em linked gaps} if $U \cap V \neq \emptyset$, $(\partial U) \setminus V \neq \emptyset$ and $(\partial V)\setminus U \neq \emptyset$.

We say that $C_1$ and $C_2$ are {\em  linked compact sets} in $\R^d$ if for every pair of gaps $G^{1}$ and $G^{2}$ of $C_1$ and $C_2$ respectively we have that either their intersection is empty or they are linked gaps.
\end{definition}

\begin{proposition}\label{UglyGapLemma}
Let $C_1$ and $C_2$ be linked compact sets in $\R^d$, with $\tau(C_1) \tau(C_2)>1$, then $C_1 \cap C_2 \neq \emptyset$.
\end{proposition}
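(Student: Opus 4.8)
The plan is to mimic Newhouse's original one-dimensional argument, which proceeds by a ``no escape'' or connectedness-trapping principle: one shows that a suitable connected piece of $C_1$ cannot get past $C_2$ without meeting it. Concretely, I would argue by contradiction, assuming $C_1\cap C_2=\emptyset$. Since the two sets are disjoint and compact, there is a positive distance between them, and I would pick a pair of points $x_1\in C_1$, $x_2\in C_2$ realizing (or nearly realizing) $\dist(C_1,C_2)$, with the segment $[x_1,x_2]$ — or rather a connected path — joining them. The idea is to follow this ``bridge'' and track which gaps of $C_1$ and $C_2$ it crosses. Because neither set lies in a gap of the other (here in fact the stronger linked hypothesis), any gap $G^2$ of $C_2$ that the bridge enters must have a boundary point that is \emph{not} inside the gap $G^1$ of $C_1$ currently containing the relevant portion, and vice versa, and this is exactly where the linked condition $U\cap V\ne\emptyset$, $(\partial U)\setminus V\ne\emptyset$, $(\partial V)\setminus U\ne\emptyset$ is used.

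The core of the proof is a descent/exchange argument. Suppose at some stage the bridge is passing through a gap $G^1_{n}$ of $C_1$ and simultaneously lies in a gap $G^2_{m}$ of $C_2$ (if at any point it lies in neither, we have produced a point of $C_1\cap C_2$ and are done). Using the linked hypothesis and the thickness bounds, I would compare the diameters: from the definition of thickness, near a gap $G^1_n$ of $C_1$ there is a ``bridge'' of $C_1$ (the nearest other gap or the unbounded component) at distance at least $\tau(C_1)\diam(G^1_n)$, and similarly for $C_2$. The key inequality $\tau(C_1)\tau(C_2)>1$ forces that one of the two gaps is ``much larger'' than the other — precisely, one of $\diam(G^1_n)<\tau(C_2)\diam(G^2_m)$ or $\diam(G^2_m)<\tau(C_1)\diam(G^1_n)$ must hold strictly once we know both bridges are obstructed. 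This lets us replace the current pair of gaps by a new pair in which, say, the index of one of them has strictly decreased (we move to a strictly smaller gap, or equivalently to a gap appearing later in the diameter-ordered list having smaller diameter). Since the gaps are ordered by non-increasing diameter and the diameters of the chosen gaps form a strictly decreasing sequence that cannot stay bounded below away from zero, the process must terminate — and it can only terminate by the bridge reaching a point lying in neither a gap of $C_1$ nor a gap of $C_2$, i.e.\ a point of $C_1\cap C_2$, contradicting our assumption.

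More carefully, the right framework is probably to not talk about a single bridge path but to consider, for each set, the family of gaps and their ``left/right'' or, in $\R^d$, their complementary bridges, and to set up an alternating sequence $G^1_{n_1}\supsetneq$-ish $G^2_{m_1}$, $G^1_{n_2}$, $G^2_{m_2},\dots$ where at each step the linked condition produces a boundary point of one gap that escapes the other, which then must sit in a strictly smaller gap of the other set (or in the set itself). Each such step strictly decreases $\max\{n_k,m_k\}$ or the relevant diameter, so well-ordering of $\N$ (or the fact that $\diam G_n\to 0$, guaranteed by the Observation for thick sets) terminates the recursion, and termination means we have landed in $C_1\cap C_2$.

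The main obstacle I anticipate is purely the geometric/topological bookkeeping in $\R^d$: in $\R$ a gap is an interval with exactly two sides, so ``left bridge'' and ``right bridge'' make the exchange argument bookkeeping trivial, whereas in $\R^d$ a gap $G^1_n$ has a connected but topologically complicated complement, and ``escaping'' one gap into another requires a genuine connectedness argument (following a path, invoking that $\partial U\setminus V\ne\emptyset$ together with $\partial V\setminus U\ne\emptyset$ gives a point one can push to the other set) rather than just picking an endpoint. Making precise the claim ``if the bridge is obstructed on the $C_1$-side by a gap of $C_2$ and vice versa, then $\tau(C_1)\tau(C_2)>1$ is contradicted unless we can pass to strictly smaller gaps'' is where the distances $\dist(G_n,\bigcup_{i<n}G_i\cup E)$ in Definition \ref{thicknessRd} must be combined with the triangle inequality and the linked hypothesis; this estimate, and verifying the strict decrease of the diameter sequence, is the technical heart and the step most likely to need care.
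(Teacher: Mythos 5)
Your outline captures the right engine of the argument — alternation between gaps of $C_1$ and $C_2$, using $\tau(C_1)\tau(C_2)>1$ to show that one of the two relevant gaps must always admit a ``new'' strictly later gap in the other set's ordering, so that at least one of the two diameter sequences tends to $0$. But the closing move is wrong: the recursion does \emph{not} terminate, and cannot. You claim that because the diameters ``cannot stay bounded below away from zero, the process must terminate,'' and that ``termination means we have landed in $C_1\cap C_2$.'' In fact the process runs forever: at every step you are handed a point in a bounded gap of the other set (since by the standing assumption $C_1\cap C_2=\emptyset$, every boundary point of a gap of one set lies inside some gap of the other), so there is always a next step. The actual conclusion comes from a compactness/limit argument on the resulting infinite sequence: one produces pairs of linked gaps $(U_k,V_k)$ with, say, $\diam U_k\to 0$, then picks $x_k\in\partial U_k\subset C_1$ and $y_k\in U_k\cap\partial V_k\subset C_2$, so $\dist(x_k,y_k)\le\diam U_k\to0$; sequential compactness of $C_1$ and $C_2$ gives a common limit point in $C_1\cap C_2$, contradicting disjointness. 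That final step is missing from your write-up, and the termination reasoning in its place is not valid.

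Two smaller points. First, your anticipated obstacle — that one needs a genuine path/connectedness argument to ``escape'' a gap in $\R^d$ — does not arise: the linked hypothesis hands you directly a boundary point $a_k\in\partial U_k\setminus V_k$ (and symmetrically $b_k\in\partial V_k\setminus U_k$), and since $a_k\in C_1$ is assumed not to lie in $C_2$ it must lie in some gap $G^2_{n_k}$ of $C_2$; no bridge path is needed. Second, the key dichotomy is better stated exactly as in the paper: for fixed $n,m$ the two inequalities $\dist(G^1_m,\bigcup_{i<m}G^1_i\cup E_1)\le\diam(G^2_n)$ and $\dist(G^2_n,\bigcup_{i<n}G^2_i\cup E_2)\le\diam(G^1_m)$ cannot both hold under $\tau_1\tau_2>1$; whichever one fails tells you that the escaping boundary point must land in a gap with strictly larger index (hence eventually smaller diameter, via the Observation that $\diam G_n\to0$ for thick sets). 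Your formulation in terms of one gap being ``much larger'' than the other is close in spirit but the precise statement — and the fact that it drives the index strictly up, not the diameter strictly down at every single step — is what makes the bookkeeping go through cleanly.
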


\begin{proof}
By definition of $\tau$,
\[\tau_1:=\tau(C_1):=\inf_{m} \frac{\dist\big(G_m^{1}, \bigcup_{1\leq i \leq m-1}G_i^{1} \cup E_1\big)}{\diam (G_m^{1})}\]
and
\[\tau_2:=\tau(C_2):=\inf_{n} \frac{\dist\big(G_n^{2}, \bigcup_{1\leq i \leq n-1}G_i^{2} \cup E_2\big)}{\diam (G_n^{2})}\]
where $C_1$ and $C_2$  have gaps $G_n^{1}$ and $G_n^{2}$ and external path-connected components $E_1$ and $E_1$  respectively.

We  assume that $C_1 \cap C_2 = \emptyset$ and will obtain a contradiction. Then, \[C_1 \subseteq C_2^C=\bigcup_i G_i^{2} \cup E_2  \ \text{ and } \  C_2 \subseteq C_1^C=\bigcup_i G_i^{1} \cup E_1.\]
We will construct inductively a sequence $(U_i,V_i)_{i \in \N}$ of pairs of linked gaps that occur in the construction of $C_1$ and $C_2$ respectively, such that either $\diam U_i \to 0$ or $\diam V_i\to 0$ (or both).

{\em To start the induction:} Since $E_1 \cap E_2 \neq \emptyset$ and $C_1$ and $C_2$ are linked, $E_1$ and $E_2$ are linked gaps, so we take $(U_1,V_1):=(E_1, E_2)$.

{\em Inductive step:} Given that we have defined  a pair of linked gaps $(U_k, V_k)$ of $C_1$ and $C_2$ defined, we now define $(U_{k+1}, V_{k+1})$.

Since $(U_k, V_k)$ is a pair of linked gaps, there is $a_k \in \partial U_k \setminus V_k$. Since $a_k \in \partial U_k$, we have $a_k\in C_1$, hence by assumption $a_k \notin C_2$, so there is  a gap $G_{n_k}^{2}$ of $C_2$ such that $a_k \in G_{n_k}^{2}$.
Note that $(U_k, G_{n_k}^{2})$ are linked because they intersect and $C_1$ and $C_2$ are linked.

In the same way, since $(U_k, V_k)$ is a pair of linked gaps there is $b_k \in \partial V_k \setminus U_k$. Since  $b_k \in \partial V_k$, then $b_k\in C_2$, hence $b_k \notin C_1$, so there is $G_{m_k}^{1}$ a gap of $C_1$ such that $b_k \in G_{m_k}^{1}$
Again $(G_{m_k}^{1}, V_k)$ are linked.

We will show that we can choose  $(U_{k+1}, V_{k+1})$ to be either $(U_k, G_{n_k}^{2})$  or $(G_{m_k}^{1}, V_k)$ in such a way the diameters of either $U_k$ or $V_k$ tends to $0$.

We observe that for a fixed pair $n,m \in \N$ the following two inequalities cannot hold simultaneously:
\begin{itemize}
\item $\dist (G_{m}^{1}, \bigcup_{1\leq i \leq  m-1} G_i^{1} \cup E_1)\leq \diam (G_{n}^{2})$
\item $\dist (G_{n}^{2}, \bigcup_{1\leq i \leq n-1}G_i^{2} \cup E_2) \leq \diam (G_{m}^{1}).$
\end{itemize}
For if both hold, then by definition of thickness,
\[\diam (G_{n}^{2})\geq \tau_1 \diam (G_{m}^{1}) \ \text{ and }\  \diam(G_{m}^{1})\geq \tau_2 \diam (G_{n}^{2}).\]
Using the hypothesis that   $\tau_1 \tau_2>1$,
\[\diam(G_{m}^{1})\geq \tau_2 \diam (G_{n}^{2}) \geq \tau_1 \tau_2 \diam (G_{m}^{1})>\diam (G_{m}^{1}),\]
a contradiction.

The  gaps $U_k$ and $V_k$ can be identified as $U_k:=G_{m}^{1}$ and $V_k:=G_{n}^{2}$ for some $n,m \in \N$.
In the case $\dist (G_{m}^{1}, \bigcup_{1\leq i \leq  m -1} G_i^{1} \cup E_1)> \diam (G_{n}^{2})$,we also know that $(U_k, V_k)$ are linked, so $\overline{V_k}$ does not intersect $G_i^{1}$ for every $1\leq i \leq  m-1$. Also $b_k \in \partial V_k \setminus U_k$. Then $b_k \in G_{m_k}^{1}$ with $m_k>m$, and we take $(U_{k+1}, V_{k+1}):=(G_{m_k}^{1}, V_k)$.

In the case $\dist (G_{m}^{1}, \bigcup_{1\leq i \leq  m -1} G_i^{1} \cup E_1)\leq \diam (G_{n}^{2})$, by the previous observation we have $\dist (G_{n}^{2}, \bigcup_{1\leq i \leq n-1}G_i^{2} \cup E_2) > \diam (G_{m}^{1})$. Analogously to the previous case $a_k \in  G_{n_k}^{2}$ with $n_k>n$, and we take $(U_{k+1}, V_{k+1}):=(U_k, G_{n_k}^{2})$.

Since one or other of these cases occurs infinitely many times, we get a sequence $(U_k, V_k)$ of linked gaps of $C_1$ and $C_2$, where at least one of the diameter sequences tends to $0$.
Assume, by symmetry, that $\diam (U_k) \to 0$. Take $x_k \in \partial U_k \subseteq C_1$, and $y_k \in U_k \cap \partial V_k \subseteq C_2$. Then,
\[\dist (x_k, y_k) \leq \diam (U_k) \to 0.\]
Since $(x_k)_k \subseteq C_1$ there exists $(x_{k_j})_j$ a subsequence $(x_{k_j})_j$ convergent to $x\in C_1$. Since $(y_{k_j})_j \subseteq C_2$, we also get $(y_{k_j})_j \to x \in C_2$.
So $x \in C_1 \cap C_2$ contradicting the assumption that $C_1 \cap C_2 =\emptyset$.
\end{proof}

\section{Thickness and winning sets}

Schmidt's game and its variants are a powerful tool for investigating properties of intersections of sequences of sets, see \cite{BHNS} for a survey.
We will define a game and prove that every set with positive thickness can be seen as a winning set with certain parameters for the game. We will show that game has good properties, for example monotonicity in its parameters, invariance under similarities, and that the intersection of winning sets is again a winning set with different parameters. Theorem \ref{TEOlowerbound}, proved in the Appendix,  gives a lower bound for the Hausdorff dimension of winning sets for this game and this leads to Theorem \ref{THEOREMintersection} on the dimension of intersections.


\subsection*{Definition of the Game}
We define a game in $\R^d$ similar to the potential game from \cite{BFS} but adapted to our purposes:

\begin{definition}\label{gamedef}
Given $\alpha, \beta, \rho >0$ and $c \geq 0$, Alice and Bob play the $(\alpha, \beta, c, \rho)$-game in $\R^d$ under the following rules:
\begin{itemize}
\item For each $m \in \N_{0}$ Bob plays first, and then Alice plays.
\item On the $m$-th turn, Bob plays a closed ball $B_m:=B[x_m , \rho_m ]$, satisfying $\rho_0 \geq \rho$, and $\rho_{m}\geq \beta \rho_{m-1}$ and $B_m \subseteq B_{m-1}$ for every $m \in \N$.
\item On the $m$-th turn Alice responds by choosing and erasing a finite or countably infinite collection $\mathcal{A}_m$ of open sets. Alice's collection must satisfy $\sum_{i} (\diam A_{i,m})^c \leq (\alpha \rho_m )^c$ if $c>0$, or $\diam A_{1,m} \leq \alpha \rho_m$ if $c=0$ (in the case $c=0$ Alice can erase just one set).
\item $\lim_{m \to \infty} \rho_m =0$ (Note that this is a non-local rule for Bob. One can define the game without this rule, adding that Alice wins if $\lim_{m \to \infty} \rho_m \neq 0$. But to make the definitions simpler we added this condition as a rule for Bob.)
\end{itemize}
\end{definition}

Alice is allowed not to erase any set, or equivalently to pass her turn.

There exists a single point $x_{\infty} = \bigcap_{m \in \N_0} B_m$ called the {\it outcome of the game}. We say a set $S \subset \R^d$ is an $(\alpha, \beta, c, \rho)$-{\it winning set}, or just a {\it winning set} when the game is clear, if Alice has a strategy guaranteeing that if $x_{\infty} \notin \bigcup_{m \in \N_0} \bigcup_i A_{i,m}$, then $x_{\infty} \in S$.

Note that the conditions $B_0 \supseteq B_1 \supseteq \cdots$ and $\lim_{m \to \infty} \rho_m =0$ imply $\beta < 1$.

\subsection*{Good properties of the game}

\begin{proposition}[Countable intersection property]\label{Countable intersection property}
Let J be a countable index set, and for each $j \in J$ let $S_j$ be an $(\alpha_j , \beta, c, \rho)$-winning set, where $c>0$. Then, the set $S:= \bigcap_{j \in J} S_j$ is $(\alpha ,\beta, c, \rho)$-winning where $\alpha^c = \sum_{j \in J} \alpha_j^c$ (assuming that the series converges).
\end{proposition}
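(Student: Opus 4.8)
The plan is to run all the games simultaneously, letting Alice's single strategy in the intersection game be the ``superposition'' of her strategies in the component games. Concretely, suppose Bob is playing a fixed $(\alpha,\beta,c,\rho)$-game with moves $B_0 \supseteq B_1 \supseteq \cdots$. I would have Alice imagine that Bob is making \emph{exactly these same moves} in each of the games against $S_j$ — this is legal because the constraints on Bob's balls ($\rho_0 \ge \rho$, $\rho_m \ge \beta \rho_{m-1}$, nested, radii $\to 0$) do not depend on $j$. For each $j$, Alice has a winning strategy $\sigma_j$ for the $(\alpha_j,\beta,c,\rho)$-game; on turn $m$ it produces a countable collection $\mathcal{A}_m^{(j)}$ with $\sum_i (\diam A_{i,m}^{(j)})^c \le (\alpha_j \rho_m)^c$. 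Alice's move in the intersection game on turn $m$ is then the union over $j \in J$: $\mathcal{A}_m := \bigcup_{j \in J} \mathcal{A}_m^{(j)}$.

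Next I would check that this is a legal move. Since $J$ is countable and each $\mathcal{A}_m^{(j)}$ is countable, $\mathcal{A}_m$ is a countable collection of open sets. The size constraint is the key computation: since $c > 0$,
\[
\sum_{A \in \mathcal{A}_m} (\diam A)^c \ \le\ \sum_{j \in J} \sum_i (\diam A_{i,m}^{(j)})^c \ \le\ \sum_{j \in J} (\alpha_j \rho_m)^c \ =\ \rho_m^c \sum_{j \in J} \alpha_j^c \ =\ (\alpha \rho_m)^c,
\]
using $\alpha^c = \sum_{j \in J} \alpha_j^c$ and the assumption that this series converges. (The first inequality is just the subadditivity of a sum over a union that might have repeated or overlapping sets, which only helps.) So Alice's collection satisfies the required bound, and the move is legal.

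Finally I would verify the winning condition. Let $x_\infty = \bigcap_m B_m$ be the outcome, and suppose $x_\infty \notin \bigcup_m \bigcup_{A \in \mathcal{A}_m} A$. Then for each fixed $j$, $x_\infty \notin \bigcup_m \bigcup_i A_{i,m}^{(j)}$ as well, since $\mathcal{A}_m^{(j)} \subseteq \mathcal{A}_m$. But $x_\infty$ is precisely the outcome of the game in which Bob played $(B_m)_m$ and Alice followed her winning strategy $\sigma_j$; hence by the definition of a winning set, $x_\infty \in S_j$. As this holds for every $j \in J$, we get $x_\infty \in \bigcap_{j \in J} S_j = S$, which is exactly what is needed to conclude that $S$ is $(\alpha,\beta,c,\rho)$-winning.

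I do not expect a serious obstacle here; the proof is essentially bookkeeping. The one point that deserves care is making sure Bob's moves can legitimately be ``replayed'' in each component game — i.e. that a sequence of balls legal in the $(\alpha,\beta,c,\rho)$-game is also legal in every $(\alpha_j,\beta,c,\rho)$-game — which is immediate because the parameters $\beta$, $c$, $\rho$ governing Bob's constraints are the same across all the games, and only Alice's budget parameter $\alpha_j$ varies. The other mild subtlety is that the collections $\mathcal{A}_m^{(j)}$ for different $j$ may overlap, so the sum over the union $\mathcal{A}_m$ could in principle double-count; but this only makes the left-hand side smaller relative to $\sum_j \sum_i (\diam A_{i,m}^{(j)})^c$, so the bound still goes through. (If one wants to be fully scrupulous one can reindex $\mathcal{A}_m$ as a genuine countable family, discarding duplicates, which only decreases the relevant sum.)
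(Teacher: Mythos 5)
Your proposal is correct and is exactly the paper's argument: the paper states, in one line, that Alice should play the union over $j$ of her component strategies, and you have simply spelled out the legality check $\sum_j (\alpha_j \rho_m)^c = (\alpha\rho_m)^c$ and the winning verification. No further comment needed.
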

To see this, it is enough to consider the following strategy for Alice: in the turn $k$ she plays the union over $j$ of all the strategies of turn $k$.

\begin{proposition}[Monotonicity]\label{Monotonicity}
If $S$ is $(\alpha , \beta, c, \rho)$-winning and $\tilde{\alpha} \geq \alpha$, $\tilde{\beta} \geq \beta$, $\tilde{c} \geq c$ and $\tilde{\rho} \geq \rho$, then $S$ is $(\tilde{\alpha} , \tilde{\beta}, \tilde{c}, \tilde{\rho})$-winning.
\end{proposition}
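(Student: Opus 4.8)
The plan is to show that any strategy for Alice witnessing that $S$ is $(\alpha,\beta,c,\rho)$-winning can be reinterpreted, essentially verbatim, as a strategy witnessing that $S$ is $(\tilde\alpha,\tilde\beta,\tilde c,\tilde\rho)$-winning. The point is that increasing the parameters only relaxes the constraints on Bob while leaving Alice's obligations no harder to meet. First I would note the monotonicity in $\rho$: a legal run of the $(\alpha,\beta,c,\tilde\rho)$-game (with $\tilde\rho\ge\rho$) has $\rho_0\ge\tilde\rho\ge\rho$, so it is also a legal run of the $(\alpha,\beta,c,\rho)$-game, and Alice simply plays her old strategy; every move she makes is legal since her size constraint $\sum_i(\diam A_{i,m})^c\le(\alpha\rho_m)^c$ does not involve $\rho$. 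Hence the outcome lands in $S$ unless it was erased, so $S$ is $(\alpha,\beta,c,\tilde\rho)$-winning.

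Next I would handle monotonicity in $\beta$. If $\tilde\beta\ge\beta$, then any run of the $(\alpha,\tilde\beta,c,\rho)$-game satisfies $\rho_m\ge\tilde\beta\rho_{m-1}\ge\beta\rho_{m-1}$, so it is again a legal run of the $(\alpha,\beta,c,\rho)$-game; Alice reuses her strategy, and legality of her moves is unaffected. For monotonicity in $\alpha$, given $\tilde\alpha\ge\alpha$ Alice can either shrink each erased set slightly or, more cleanly, just play exactly her $\alpha$-strategy: since $\sum_i(\diam A_{i,m})^c\le(\alpha\rho_m)^c\le(\tilde\alpha\rho_m)^c$, all her moves remain legal in the $(\tilde\alpha,\beta,c,\rho)$-game, and the winning conclusion transfers. (In the $c=0$ case the inequality $\diam A_{1,m}\le\alpha\rho_m\le\tilde\alpha\rho_m$ does the same job.)

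The only step with any subtlety is monotonicity in $c$, because the size constraint changes shape: for $c>0$ it is an $\ell^c$-type bound, and these bounds are \emph{not} monotone in $c$ for arbitrary collections (more sets of comparable size can satisfy an $\ell^c$ bound but fail an $\ell^{\tilde c}$ bound with $\tilde c<c$; here however $\tilde c\ge c$). The relevant direction is in our favour: if $\sum_i(\diam A_{i,m})^c\le(\alpha\rho_m)^c$ and we may assume WLOG that Alice only ever erases sets of diameter at most $\alpha\rho_m$ (any larger set can be discarded from $\mathcal A_m$ without affecting whether $x_\infty$ is erased, since $x_\infty\in B_m$ which has diameter $2\rho_m$, and a set of diameter $>\alpha\rho_m\ge\dots$ — more carefully, one truncates), then $\diam A_{i,m}\le\alpha\rho_m$ for all $i$, so $(\diam A_{i,m})^{\tilde c}=(\diam A_{i,m})^c(\diam A_{i,m})^{\tilde c-c}\le(\diam A_{i,m})^c(\alpha\rho_m)^{\tilde c-c}$, and summing gives $\sum_i(\diam A_{i,m})^{\tilde c}\le(\alpha\rho_m)^{\tilde c}$. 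Thus Alice's strategy is legal in the $(\alpha,\beta,\tilde c,\rho)$-game as well. Composing the four one-parameter steps (via the intermediate game with parameters $(\tilde\alpha,\tilde\beta,\tilde c,\tilde\rho)$ reached one coordinate at a time) yields the full statement. I expect the $c$-monotonicity bookkeeping — justifying that Alice may assume all her erased sets have diameter $\le\alpha\rho_m$, or otherwise clipping them — to be the only place requiring a sentence of care; everything else is an immediate "same strategy still works" argument.
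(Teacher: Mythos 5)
Your proof is correct and takes essentially the same approach as the paper's: Alice simply reuses her old strategy, and the only non\-trivial check is the $c$\nobreakdash-monotonicity, which is the standard fact that the $\ell^{\tilde c}$\nobreakdash-norm is dominated by the $\ell^{c}$\nobreakdash-norm when $c\le\tilde c$. The one thing I would flag is that the ``WLOG/truncation'' digression you insert in the $c$\nobreakdash-step is unnecessary, and in fact the parenthetical attempt to justify it is the only muddled part of an otherwise clean argument. From $\sum_i(\diam A_{i,m})^c\le(\alpha\rho_m)^c$ with $c>0$ it follows \emph{automatically} that $\diam A_{i,m}\le\alpha\rho_m$ for every $i$, because each summand $(\diam A_{i,m})^c$ is at most the whole sum; no sets need to be discarded or clipped, and nothing about $x_\infty$ or $B_m$ is required. (This is exactly the one-line proof of the $\ell^p$\nobreakdash-norm monotonicity: after normalizing so the $\ell^c$\nobreakdash-norm equals $1$, every term is at most $1$ and raising to the higher power $\tilde c$ only shrinks it.) So the step you singled out as ``the only place requiring a sentence of care'' actually needs none. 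With that simplification your four one\nobreakdash-parameter reductions compose exactly as the paper intends; the $c=0$ to $\tilde c>0$ transition is likewise immediate, since a single erased set with $\diam A_{1,m}\le\alpha\rho_m$ trivially satisfies $(\diam A_{1,m})^{\tilde c}\le(\alpha\rho_m)^{\tilde c}$.
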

This holds because  \[\Big( \sum_i \alpha_i^{\tilde{c}}\Big)^{1/\tilde{c}} \leq \Big( \sum_i \alpha_i^{c}\Big)^{1/c} \text{ when } c\leq \tilde{c},\] so Alice can answer in the $(\tilde{\alpha} , \tilde{\beta}, \tilde{c}, \tilde{\rho})$-game using her strategy to answer from the $(\alpha , \beta, c, \rho)$-game.

\begin{proposition}[Invariance under similarities]\label{Invariance under similarities}
Let $f:\R^d \to \R^d$ be a  similarity satisfying
 $$\dist (f(x),f(y))=\lambda \dist(x,y) \ \mbox{ for all }\ x,y \in \R^d.$$
Then a set $S$ is $(\alpha , \beta, c, \rho)$-winning if and only if the set $f(S)$ is $(\alpha , \beta, c, \lambda \rho)$-winning.
\end{proposition}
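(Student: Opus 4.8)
The plan is to prove Proposition \ref{Invariance under similarities} by transporting strategies back and forth across the similarity $f$. The key observation is that a similarity $f$ with ratio $\lambda$ sends a closed ball $B[x,r]$ to the closed ball $B[f(x),\lambda r]$, sends open sets to open sets, scales every diameter by $\lambda$, and preserves all the inclusions and (strict) inequalities that appear in the rules of Definition \ref{gamedef}. So I would first record this single lemma-style remark: $f$ induces a bijection between legal plays of the $(\alpha,\beta,c,\rho)$-game and legal plays of the $(\alpha,\beta,c,\lambda\rho)$-game, under which Bob's ball $B_m = B[x_m,\rho_m]$ corresponds to $B[f(x_m),\lambda\rho_m]$ and Alice's erased family $\mathcal{A}_m = (A_{i,m})_i$ corresponds to $(f(A_{i,m}))_i$.

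Next I would check that this correspondence respects each rule in turn: Bob's nesting condition $B_m\subseteq B_{m-1}$ becomes $f(B_m)\subseteq f(B_{m-1})$ since $f$ is a bijection; the radius conditions $\rho_0\geq\rho$ and $\rho_m\geq\beta\rho_{m-1}$ become $\lambda\rho_0\geq\lambda\rho$ and $\lambda\rho_m\geq\beta(\lambda\rho_{m-1})$ by multiplying through by $\lambda>0$; the vanishing condition $\rho_m\to 0$ is equivalent to $\lambda\rho_m\to 0$; and Alice's size constraint $\sum_i(\diam A_{i,m})^c\leq(\alpha\rho_m)^c$ becomes $\sum_i(\diam f(A_{i,m}))^c = \lambda^c\sum_i(\diam A_{i,m})^c \leq \lambda^c(\alpha\rho_m)^c = (\alpha\lambda\rho_m)^c$ — note the parameter $\alpha$ is unchanged because the $\lambda$ cancels between the two sides, which is exactly why only $\rho$ scales in the statement (and the $c=0$ case is identical with diameters in place of sums). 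Finally the outcome $x_\infty=\bigcap_m B_m$ is sent to $f(x_\infty)=\bigcap_m f(B_m)$, again because $f$ is a bijection, and $x_\infty\notin\bigcup_{m,i}A_{i,m}$ if and only if $f(x_\infty)\notin\bigcup_{m,i}f(A_{i,m})$.

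With this dictionary in place, the proof is a two-line argument: given a winning strategy for Alice in the $(\alpha,\beta,c,\rho)$-game for $S$, define a strategy in the $(\alpha,\beta,c,\lambda\rho)$-game by pulling Bob's moves back through $f^{-1}$, applying Alice's original strategy, and pushing her response forward through $f$; if Bob's pushed-forward outcome avoids Alice's erased sets, then the pulled-back outcome avoids the original erased sets, so it lies in $S$, hence the outcome lies in $f(S)$. This shows $S$ $(\alpha,\beta,c,\rho)$-winning $\Rightarrow$ $f(S)$ $(\alpha,\beta,c,\lambda\rho)$-winning. The converse is identical applied to the similarity $f^{-1}$, which has ratio $\lambda^{-1}$ (so $\lambda\rho$ scales back to $\rho$), giving the "if and only if".

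There is essentially no serious obstacle here — the result is a bookkeeping statement about how the game data transforms under $f$. The only point that deserves a word of care is making sure every quantity that the rules constrain is one that $f$ either preserves or scales in a controlled way: inclusions and the single point of intersection are preserved outright, while diameters and radii all scale by the same factor $\lambda$, so the only surviving net effect is the $\rho\mapsto\lambda\rho$ appearing in the statement. I would present the argument at roughly the level of detail of the two sketched proofs that follow Propositions \ref{Countable intersection property} and \ref{Monotonicity}, i.e. as a short paragraph rather than a formal induction.
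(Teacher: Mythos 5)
Your proposal is correct and matches the paper's own (one-line) argument, which simply says the result holds by ``translating'' the strategies through $f$; you have spelled out exactly that dictionary, verified each rule transforms as claimed, and correctly noted that $\alpha$ survives because $\lambda^c$ cancels on both sides of Alice's constraint while only $\rho$ scales.
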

This holds by ``translating'' the strategies being played through $f$.

\begin{remark}[Relationship with the potential game in \cite{BFS}]
Let $\mathcal{P}$ be the set of singletons in $\R^d$.
Since every set $A$ is contained in a ball of radius $\diam(A)$, if $S \subseteq \R^d$ is an $(\alpha , \beta, c, \rho)$-winning set, then it is an $(\alpha , \beta, c, \rho, \mathcal{P})$-potential winning set in the game defined in \cite{BFS}).
\end{remark}

\subsection*{Relationship between thickness and winning sets}

We now establish the key property that relates winning sets to thickness.

\begin{proposition}\label{compact winning}
Let $C\subset \R^d$ be compact with unbounded complement $E$ and write $S:=C \cup E$. If $\tau:=\tau(C)>0$, then $S$ is $\big(\frac{1}{\tau \beta}, \beta, 0, \frac{\beta \diam (C)}{2}\big)$-winning for every $\beta \in (0,1)$.
\end{proposition}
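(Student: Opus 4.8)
The plan is to describe an explicit strategy for Alice in the $\big(\frac{1}{\tau\beta},\beta,0,\frac{\beta\diam(C)}{2}\big)$-game that guarantees the outcome $x_\infty$ lies in $S = C\cup E$ whenever Alice's erased set does not contain $x_\infty$. Since $c=0$, on each turn Alice may erase a single open set of diameter at most $\frac{1}{\tau\beta}\rho_m$, where $B_m = B[x_m,\rho_m]$ is Bob's $m$-th ball. The key idea: whenever Bob's ball $B_m$ meets some gap $G_n$ of $C$, Alice erases that gap $G_n$ (or an appropriate open set covering the part of $B_m$ that could lead the outcome away from $C$). If Alice can always do this while respecting the diameter budget, then the only way the outcome avoids every erased set is to stay inside $C$, hence in $S$; and if at some stage Bob's balls avoid all gaps entirely, the nested balls shrink to a point of $C\cup E = S$ automatically.

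First I would set up the bookkeeping. Note $\rho_0 \ge \frac{\beta\diam(C)}{2}$, and at stage $m$, $\rho_m \ge \beta\rho_{m-1}$, and $B_m\subseteq B_{m-1}$. The crucial quantitative point is this: if $B_m = B[x_m,\rho_m]$ intersects a bounded gap $G_n$ of $C$, I claim $\diam(G_n)$ is controlled by $\rho_m$ (up to the factor $\tau$), so that erasing $G_n$ fits Alice's budget of $\frac{1}{\tau\beta}\rho_m$. Indeed, the center $x_m$ lies within $\rho_m$ of $G_n$; on the other hand, if $\diam(G_n)$ were large, then $x_m$ being near $G_n$ but inside the nested ball $B_0$ (which has radius comparable to $\beta\diam(C)$ and must contain all the relevant structure since $B\cap E_i=\emptyset$-type reasoning isn't available here, but $B_0\subseteq B[x_0,\rho_0]$ with $\rho_0$ of order $\diam(C)$) — more precisely, the definition of thickness gives $\dist(G_n,\bigcup_{i<n}G_i\cup E)\ge\tau\diam(G_n)$. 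I would use this together with the fact that $\diam(C)\ge \diam(G_n) + 2\dist(G_n, E)\ge \diam(G_n)(1+2\tau)$ — so any single gap satisfies $\diam(G_n)\le \frac{\diam(C)}{1+2\tau}$ — to bound $\diam(G_n)$. Combined with $\rho_m\ge\beta\rho_0\ge \frac{\beta^2\diam(C)}{2}$ at later stages, one gets $\diam(G_n)\le\frac{1}{\tau\beta}\rho_m$ provided one chooses the erasure at the right moment (the first stage $m$ at which $B_m$ first meets $G_n$, where $\rho_{m-1}$ was still large).

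The heart of the argument — and the step I expect to be the main obstacle — is handling the timing and the "first time Bob's ball meets a gap" issue cleanly. When Bob's ball $B_m$ first intersects a bounded gap $G_n$, the previous ball $B_{m-1}$ did not, so $B_{m-1}$ was contained in $C\cup E\cup(\text{other gaps})$; I need to argue that either $\rho_{m-1}$ is large enough that $\rho_m\ge\beta\rho_{m-1}$ gives room to erase $G_n$, or $G_n$ is so deep that erasing it is trivially within budget. The cleanest route is: at the moment $B_m$ meets $G_n$ but $B_{m-1}$ does not, we have $\dist(x_{m-1},G_n) \ge \rho_{m-1} - \diam(B_{m-1})$... actually $\dist(x_{m-1}, G_n)\ge \rho_{m-1}$ is wrong — rather $B_{m-1}\cap G_n=\emptyset$ means $\dist(x_{m-1},G_n)>\rho_{m-1}$. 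But $x_m\in B_{m-1}$ so $\dist(x_m, G_n) > \rho_{m-1}-2\rho_{m-1}$... this needs care. The honest statement is $\dist(x_m,G_n)\le\rho_m$ (since $B_m$ meets $G_n$) while any competing gap or $E$ is at distance $\ge\tau\diam(G_n)$ from $G_n$; I then erase precisely an open neighborhood of $G_n$ of diameter $\le\diam(G_n) + 2\cdot(\text{small})$, or just $G_n$ itself if $B_m\subseteq C\cup G_n\cup E$ locally. I would argue that $\diam(G_n)\le\rho_m/(\tau\beta)$ by relating $\rho_m$ at the critical stage back to $\rho_0\ge\frac{\beta\diam(C)}{2}$ and using $\diam(G_n)(1+2\tau)\le\diam(C)$, so $\diam(G_n)\le\frac{\diam(C)}{2\tau}\le\frac{\rho_0}{\tau\beta}\le\frac{\rho_m}{\tau\beta}$ — wait, this only works if $\rho_m\ge\rho_0$, which fails. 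The resolution is that Alice should erase $G_n$ at the *earliest* stage $m$ at which $B_m$ meets it, and prove that at that stage $\rho_m\ge\beta\cdot\rho_{m-1}$ with $\rho_{m-1}$ still at least of order $\diam(G_n)/\beta\cdot\tau$, because $B_{m-1}$ avoiding $G_n$ while $B_{m-1}$ has radius $\rho_{m-1}$ and $B_m\subseteq B_{m-1}$ meets $G_n$ forces $\rho_{m-1}\gtrsim\dist(B_m,\partial B_{m-1})$... Once this inductive invariant — roughly, "whenever $B_m$ first touches a new gap, $\rho_m$ is large relative to that gap's diameter" — is nailed down, the rest follows: Alice erases each such gap once, the total collection of erased sets covers $C^c\setminus E$ in a neighborhood of the outcome, and so $x_\infty\notin\bigcup\text{erased}$ forces $x_\infty\in C\cup E = S$. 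I would present the invariant as a lemma-within-the-proof and verify it by the distance comparison from the thickness definition; everything else is routine verification of the game rules.
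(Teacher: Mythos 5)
Your high-level plan (Alice erases the gap containing the outcome whenever it becomes legal) is the right one, and matches the paper's strategy in spirit, but there is a genuine gap in the way you choose \emph{when} Alice acts, and it is not a detail that can be fixed by "nailing down an invariant."

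The central confusion is the notion of "the first stage $m$ at which $B_m$ meets $G_n$." Bob's balls are nested, $B_0\supseteq B_1\supseteq\cdots$, and all of them contain the outcome $x_\infty$. If $x_\infty\in G_n$ then \emph{every} ball $B_m$ meets $G_n$, so there is no "first time," and your attempted invariant ("$B_{m-1}$ avoids $G_n$ while $B_m\subseteq B_{m-1}$ meets it") is self-contradictory --- nested sets can only lose intersections, never gain them. This also means your fallback estimate $\diam(G_n)\le\frac{\diam(C)}{1+2\tau}\le\frac{\rho_0}{\tau\beta}$ cannot be propagated to later stages, as you yourself noticed: $\rho_m$ decreases, so at the moment you finally need to erase $G_n$ the budget may have shrunk below $\diam(G_n)$ if you wait for the wrong trigger.

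The correct trigger --- and the piece your outline is missing --- is a \emph{distance} condition tied to the ordering of the gaps, not a first-intersection condition. Alice should act at the first stage $m_n$ such that $\diam(B_{m_n})<\dist\bigl(G_n,\bigcup_{i<n}G_i\cup E\bigr)$; at that point $B_{m_n}$ meets $G_n$ but cannot meet any earlier gap or $E$, so $G_n$ is unambiguously "the" gap responsible. The payoff of this choice is that minimality gives a \emph{lower} bound on $\diam(B_{m_n})$: if $m_n>0$ then $\diam(B_{m_n-1})\ge\dist(G_n,\cdots)$ and hence $\diam(B_{m_n})\ge\beta\,\dist(G_n,\cdots)$ by Bob's rule $\rho_m\ge\beta\rho_{m-1}$; if $m_n=0$ the hypothesis $\rho_0\ge\frac{\beta\diam(C)}{2}$ gives $\diam(B_0)\ge\beta\diam(C)\ge\beta\,\dist(G_n,\cdots)$. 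Either way
\[
\diam(G_n)\ \le\ \frac{1}{\tau}\,\dist\Bigl(G_n,\bigcup_{i<n}G_i\cup E\Bigr)\ \le\ \frac{1}{\tau\beta}\,\diam(B_{m_n}),
\]
which is exactly Alice's budget, and this is the step you were missing. (One also needs the small observation that the indices $m_n$ are distinct, so Alice never needs to erase two gaps on the same turn; this follows because $G_n$ is determined as the first gap meeting $B_{m_n}$.)

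In short: replace "first time $B_m$ touches $G_n$" by "first time $B_m$ is smaller than the distance from $G_n$ to the earlier gaps and $E$." With that one change, the rest of your plan goes through and reduces to the paper's argument.
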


\begin{proof}
We first describe a strategy for Alice. Given a move $B$ by Bob, how does Alice respond?
If there exists $n \in \N$ such that $B$ intersects $G_n$ and $\diam (B) < \dist(G_n, \bigcup_{1\leq i \leq n-1}G_i \cup E)$, then $B \cap G_n \neq \emptyset$ and $B \cap G_k = \emptyset$ for all $1 \leq k <n$ and $B \cap E = \emptyset$. Alice erases $G_n$ if it is a legal move, otherwise Alice does not erase anything.

To show that this strategy is winning, suppose that $x_{\infty} \notin \bigcup_m A_m$. We want to show that $x_{\infty} \in S$.
Otherwise $x_{\infty} \notin S$ so there exists $n$ such that $x_{\infty} \in G_n$. We will show that Alice erases $G_n$ at some stage of the game.
By definition $x_{\infty} \in B_m$ for all $m \in \N_0$, and we assumed $x_{\infty} \in G_n$, so $x_{\infty} \in B_m \cap G_n$ for all $m \in \N_0$.
Since $\tau>0$, then $\dist(G_n, \bigcup_{1\leq i \leq n-1}G_i \cup E)>0$. Also $\lim_{m \to \infty}\diam (B_m)=0$, so taking $m_n \in \N_0$ to be the smallest integer such that $\dist(G_n, \bigcup_{1\leq i \leq n-1}G_i \cup E) >\diam (B_{m_n})$, we know that $B_{m_n} \cap G_n \neq \emptyset$ and $B_{m_n} \cap G_k = \emptyset$ for all $1 \leq k <n$.
If $m_n=0$, then \[\diam (B_0)=2\rho_0 \geq 2 \rho=\beta \diam(C) \geq \beta \dist\Big(G_n, \bigcup_{1\leq i \leq n-1}G_i \cup E\Big).\]
If $m_n>0$, then \[\diam (B_{m_n}) \geq \beta \diam (B_{m_n-1})\geq \beta \dist\Big(G_n, \bigcup_{1\leq i \leq n-1}G_i \cup E\Big).\]
So $\diam (B_{m_n}) \geq \beta \dist\big(G_n, \bigcup_{1\leq i \leq n-1}G_i \bigcup E\big)$. Hence, $$\diam (G_n)\leq \frac{1}{\tau} \dist\Big(G_n, \bigcup_{1\leq i \leq n-1}G_i \cup E\Big) \leq \frac{1}{\tau \beta} \diam (B_{m_n})=\alpha \diam (B_{m_n}).$$
This means that it is legal for Alice to erase $G_n$ in the $m_n$-th turn, and her strategy specifies that she does so.
Finally, if $m_i=m_j$ then the first gap intersecting $B_{m_i}=B_{m_j}$ is $G_j$ and also $G_i$, so $i=j$; thus the elements of $\{m_n: \ n \in \N\}$ are all different.
\end{proof}

\begin{observation}\label{tau winning}
Let $C$ be a compact set in $\R^d$ and $\tau:= \tau(C)>0$. Then, by Proposition \ref{compact winning} and monotonicity, $S:=C \cup E$ is a $\big( \frac{1}{\tau \beta}, \beta, c, \frac{\beta}{2}\diam(C) \big)$-winning set for all $\beta \in (0,1)$ and all $c \geq 0$.
\end{observation}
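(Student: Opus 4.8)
The plan is to deduce this directly from Proposition~\ref{compact winning} and the monotonicity property, Proposition~\ref{Monotonicity}, so that no new game-theoretic argument is required; the statement is really a corollary recorded for convenient reference later.

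First I would apply Proposition~\ref{compact winning}: since $\tau = \tau(C) > 0$, for each fixed $\beta \in (0,1)$ the set $S = C \cup E$ is a $\big(\frac{1}{\tau\beta}, \beta, 0, \frac{\beta}{2}\diam(C)\big)$-winning set. This already establishes the claim in the case $c = 0$.

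Next, to obtain the statement for an arbitrary $c \geq 0$, I would invoke Proposition~\ref{Monotonicity} with $\tilde\alpha = \alpha = \frac{1}{\tau\beta}$, $\tilde\beta = \beta$, $\tilde\rho = \rho = \frac{\beta}{2}\diam(C)$ and $\tilde c = c \geq 0 = c$. Since each of the four parameters is at least as large as the corresponding parameter of the $c = 0$ game (indeed with equality in $\alpha$, $\beta$, $\rho$), Proposition~\ref{Monotonicity} yields that $S$ is $\big(\frac{1}{\tau\beta}, \beta, c, \frac{\beta}{2}\diam(C)\big)$-winning, as desired.

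The one point that merits a sentence of comment — and it is exactly the content of the proof of Proposition~\ref{Monotonicity} — is why Alice's winning strategy from the $c = 0$ game remains legal in the $c > 0$ game: her $c = 0$ move erases a single open set $A$ with $\diam A \le \alpha\rho_m$, and then $(\diam A)^c \le (\alpha\rho_m)^c$, so erasing that same single set is a legal move under the $c > 0$ constraint $\sum_i (\diam A_{i,m})^c \le (\alpha\rho_m)^c$, with the same outcome of the game. Consequently there is no genuine obstacle here; the only thing to be careful about is that the transition $c = 0 \rightsquigarrow c > 0$ is permitted by monotonicity, which it is.
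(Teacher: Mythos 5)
Your proposal is correct and takes exactly the route the paper intends: apply Proposition~\ref{compact winning} to get the $c=0$ case, then pass to arbitrary $c\ge 0$ via Proposition~\ref{Monotonicity}. Your closing remark justifying the $c=0\rightsquigarrow c>0$ step directly (a single erased set $A$ with $\diam A\le\alpha\rho_m$ satisfies $(\diam A)^c\le(\alpha\rho_m)^c$) is a sensible addition, since the one-line justification the paper gives for Monotonicity is phrased for $c>0$.
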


\section{A lower bound for the dimension of intersections of thick compact sets in $\R^d$}\label{SectionIntersection}

Whilst the gap lemma type results, concern the intersection of just two sets, it is of interest to obtain conditions that ensure that finitely many, or even countably many compact subsets of  $\R^d$ have non-empty intersection. Using the game introduced in Definition \ref{gamedef} we not only obtain conditions involving thickness that ensure that such collection of sets in has non-empty intersection, but also get a lower bound for the Hausdorff dimension of this intersection, as stated in Theorem \ref{THEOREMintersection}.

To achieve this we use the following technical theorem that gives a lower bound for the dimension of winning sets, based on \cite[Theorem 5.5]{BFS} and \cite[Theorem 4]{AY} and proved in the Appendix A. The parameters of a winning set provide a measure of its size and we translate this in terms of thickness which is a single number that is easy to compute and work with.

\begin{theorem}\label{TEOlowerbound}
Let $S \subseteq \R^d$ be an $(\alpha, \beta, c, \rho)$-winning set with $c<d$ and $\beta \leq \frac{1}{4}$. Then for every ball $B_0$ of radius larger than $\rho$,
\[\dim_H(S\cap B_0)\ \geq d-K_1\  \frac{\alpha^{d}}{|\log(\beta)|}\ >0\ \text{ if }\ \alpha^c\leq \frac{1}{K_2}(1-\beta^{d-c}),\]
where $K_1$ and $K_2$ are as in \eqref{knumbers}.
\end{theorem}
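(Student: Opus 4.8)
The plan is to realise a subset of $S\cap B_0$ of Hausdorff dimension at least $d-K_1\alpha^d/|\log\beta|$ as the limit set of a Cantor-type tree in which Bob's moves describe the branches and Alice is forced to follow her winning strategy for $S$, and then to bound $\dim_H(S\cap B_0)$ from below by placing a Frostman measure on this limit set and invoking the mass distribution principle. This is the scheme of \cite[Theorem~5.5]{BFS} and \cite[Theorem~4]{AY}, adapted to the game of Definition \ref{gamedef}; the argument is carried out in the appendix.

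The construction proceeds level by level. Bob's opening ball may be taken to be $B_0$ (or a concentric sub-ball of radius at least $\rho$), and Alice answers with her winning strategy. Suppose that at level $\ell$ we have a finite family $\mathcal{B}_\ell$ of pairwise disjoint closed balls of a common radius $r_\ell$, each the last ball of a legal run of Bob's moves along which no set erased by Alice met it. Fix $B\in\mathcal{B}_\ell$. Inside $B$ we pack a maximal family of pairwise separated sub-balls of a smaller radius $r_{\ell+1}$, where the ratio $r_{\ell+1}/r_\ell$ and the number of Bob-moves used to pass from scale $r_\ell$ to scale $r_{\ell+1}$ are parameters to be fixed; a volume count, comparing balls with the axis-parallel cubes one actually packs, produces at least about $(r_\ell/(\sqrt{d}\,r_{\ell+1}))^d$ of them. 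Bob then descends from $B$ towards each chosen sub-ball, and Alice erases sets along the way. The key point is that a sub-ball can be eliminated only by an erased set whose diameter is comparable to $r_{\ell+1}$ (finer erasures are absorbed at later levels), while the total $c$-content Alice may spend on erasures at the relevant scales is bounded by a geometric series over the moves made between levels; since $c<d$, such a family of erased sets of bounded total $c$-content meets at most a controlled number of the candidate sub-balls, the worst case being — by convexity — concentrated in the largest admissible diameters. Feeding in the hypothesis $\alpha^c\le\frac{1}{K_2}(1-\beta^{d-c})$ shows that at least $M_\ell$ sub-balls survive, a definite fraction of the packing; these form $\mathcal{B}_{\ell+1}$. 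Passing to the limit, $F:=\bigcap_\ell\bigcup_{B\in\mathcal{B}_\ell}B$ is a nonempty compact subset of $B_0$, and every point of $F$ is an outcome of a play of the game that avoids all of Alice's erasures, hence lies in $S$; so $F\subseteq S\cap B_0$.

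It remains to estimate $\dim_H F$. Give $F$ the uniform Cantor measure $\mu$, splitting each parent's mass equally among its children. Using the separation of the sub-balls and the uniformity of the branching, a routine computation gives $\mu(B(x,r))\le C r^s$ for all small $r$, with $s=\log M_\ell/\log(r_\ell/r_{\ell+1})$. Inserting the lower bound for $M_\ell$ and the chosen ratio, and absorbing the dimensional ($\sqrt d$) factors into the constant, gives $s\ge d-K_1\alpha^d/|\log\beta|$, and the mass distribution principle then yields $\dim_H(S\cap B_0)\ge\dim_H F\ge s$. The conditions $\beta\le\frac14$ and $\alpha^c\le\frac{1}{K_2}(1-\beta^{d-c})$ are precisely what guarantee $M_\ell\ge 2$ at every level (so that $F\ne\emptyset$) and that $s>0$; the explicit constants $K_1,K_2$ of \eqref{knumbers} emerge from tracking the packing factors ($24\sqrt d$, $16\sqrt d$), the factor $1-2^{-d}$ from a geometric sum, and the factor $1+2\cdot 4^d$ through these estimates.

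The main obstacle is the bookkeeping in the middle step: one must choose the per-level radius ratio and the number of Bob-moves bundled into a level so as to simultaneously make the packing nearly optimal, keep $\log(r_\ell/r_{\ell+1})$ from exceeding $|\log\beta|$ by too much, and control the total $c$-content Alice can spend between consecutive levels, and then verify that all three can be met exactly when $\alpha^c\le\frac{1}{K_2}(1-\beta^{d-c})$. Quantifying how many disjoint sub-balls a countable family of sets of bounded total $c$-content can meet — and recognising that only the bound on sets of diameter at least $r_{\ell+1}$ is needed, so that Alice's arbitrarily fine erasures do no damage at the current level — is the delicate part; the existence of the limit set, the Frostman estimate, and the mass distribution principle are standard.
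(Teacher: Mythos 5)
Your sketch follows essentially the same scheme as the paper: lattice balls at geometric scales $\rho\beta^n$, blocks of $N=\lfloor\gamma^d/\alpha^d\rfloor$ Bob-moves per Cantor level, a counting argument leaving a definite fraction of sub-balls unharmed by Alice, and a branching estimate on the limit set $F\subseteq S\cap B_0$. The device that makes your heuristic ``finer erasures are absorbed at later levels'' precise is the paper's cumulative potential $\phi_j(B)$ (the total $c$-content of all previously erased sets meeting $B$) and the class $\mathcal{D}'_j$ of good balls with $\phi_j(B)\le(\gamma\rho_j)^c$; it is the boundedness of this potential along the construction, rather than a per-level filter on diameters, that forces any erased set covering a point of $F$ to have diameter zero and hence $F\subseteq S$.
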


We now prove Theorem \ref{THEOREMintersection}  by combining Theorem \ref{TEOlowerbound} with the fact that sets of positive thickness can be regarded as winning sets.

\begin{proof}[Proof of Theorem \ref{THEOREMintersection}]
By Observation \ref{tau winning}, for each $i$
\[S_i:=E_i \cup C_i \text{ is a }\bigg( \frac{1}{\tau_i \beta}, \beta, c, \frac{\beta}{2}\diam(C_i) \bigg) \text{-winning set}\]
for all $\beta \in (0,1)$ and all $c \geq 0$.
We fix $c \in (0,d)$ and $\beta \in (0,\frac{1}{4}]$ from the hypothesis (iii).
We define $\rho:=\frac{\beta}{2} \sup_i \diam (C_i)$ which is a finite number by hypothesis (i). By monotonicity, Proposition \ref{Monotonicity}, $S_i$ a $\big( \frac{1}{\tau_i \beta}, \beta, c, \rho \big)$-winning set.
Hence, by Proposition \ref{Countable intersection property}, \[S:=\bigcap_i S_i \text{ is a }(\alpha, \beta, c, \rho) \text{-winning set},\] where
 \begin{equation}\label{boundalpha}\alpha:=\Big( \sum_i (\tau_i \beta)^{-c}\Big)^{1/c}=\frac{1}{\beta} \Big( \sum_i \tau_i^{-c}\Big)^{1/c}.\end{equation}

By hypothesis (ii)  there exists a ball $B$ such that $B \cap E_i =\emptyset$ for all $i$ and we take $r$ to be the radius of $B$.
By definition of $\rho$ and $\beta$, we have $r\geq \rho$.
By hypothesis (iii) and equation \eqref{boundalpha} we have $\alpha^c \leq \frac{1}{K_2}(1-\beta^{d-c})$, hence we can apply Theorem \ref{TEOlowerbound} to get
\[\dim_H(S \cap B) \geq d- K_1 \frac{\alpha^d}{|\log (\beta)|}>0,\] and we know by definition of $\alpha$ that $d- K_1 \frac{\alpha^d}{|\log (\beta)|}=d- K_1 \frac{\left( \sum_i \tau_i^{-c}\right)^{d/c}}{\beta^d |\log (\beta)|}$.

Since $B$ does not intersect any $E_i$,
\[S_i \cap B \subseteq S_i \cap E_i^C \cap B=C_i \cap B \text{ for every }i,\]
so $S \cap B \subseteq B \cap \bigcap_i C_i$. The conclusion follows.
\end{proof}

\section{Application: Patterns in thick compact sets of $\R^d$}\label{SectionPatterns}

In this section we deduce Theorem \ref{TeoPatterns} on the existence of small copies of pattens in sufficiently thick sets from  Theorem \ref{THEOREMintersection} and illustrate this in the case of Sierpi\'{n}ski carpets.

\begin{proof}[Proof of Theorem \ref{TeoPatterns}]
We write $B_0:=\frac{1}{8}B$ for the ball with the same centre as $B$ but with radius $\frac{1}{8} \rad (B)$.
Given a finite set $A$ and $\lambda \in \big(0,\frac{\diam (B)}{16\diam (A)}\big)$ we seek translates of $\lambda A:=\{b_1, \cdots, b_n\}$ with $b_i\in \R^d$ where we can assume $b_1=0$.
As $\diam (\lambda A)< \frac{\diam (B)}{16}$ then $\lambda A \subseteq B(0, \frac{\diam (B)}{16})$.

We define $C_i:=C-b_i$ which is a compact set with thickness $\tau$ for every $1\leq i \leq n$.
By hypothesis there is a ball $B \subseteq E^C$, so there is a ball $\widetilde{B} \subseteq \bigcap_{1\leq i \leq n}(B -b_i)\subseteq \bigcap_{1\leq i \leq n}E_i^C$ of diameter $\diam (B)(1- \frac{1}{16})=\frac{15}{16}\diam (B)$.

We take $\beta:=\min \{\frac{1}{4}, \frac{\diam (\tilde{B})}{\diam (C)} \}$, $\alpha:=1/\tau \beta$ and
$c := d- 1/\log(\tau\beta)$. Then $\alpha^c=e\alpha^d$ and $d-c=1/\log(\tau \beta)$.

By Theorem \ref{THEOREMintersection}, if
\begin{equation}\label{cond cardinal}
n \alpha^c \leq \frac{1}{K_2} (1- \beta^{d-c})
\quad \text{or equivalently}\quad  n \leq \frac{1}{K_2} \alpha^{-c} (1 - \beta^{d-c})
\end{equation}
then $\dim_H (\widetilde{B} \cap \bigcap_{1\leq i \leq n} C_i)>0$.

By definition of $\alpha$, $\beta$ and $c$, and using that $f(\tau):=\log(\tau) (1 - \beta^{1/\log(\tau \beta)})$ is a decreasing function with $\lim_{\tau \to \infty}f(\tau)=|\log (\beta)|$,
\begin{align*}\frac{1}{K_2} \alpha^{-c} (1 - \beta^{d-c})
&=\frac{1}{e K_2}(\tau\beta)^d(1 - \beta^{1/\log(\tau \beta)})\\
&=\frac{1}{e K_2} \frac{\tau^{d}}{\log(\tau)} \beta^d  \log(\tau) (1 - \beta^{1/\log(\tau \beta)})\\
&\geq \frac{1}{e K_2}  \beta^d  |\log(\beta)|  \frac{\tau^{d}}{\log(\tau)}
\end{align*}
Setting
\[N(\tau):= \left\lfloor \frac{\beta^d  |\log(\beta)|}{e K_2} \frac{\tau^d}{\log(\tau)} \right\rfloor\]
 it follows from if  \eqref{cond cardinal} that if $n \leq N(\tau)$ then  $\text{dim}_{\rm H}(\widetilde{B} \cap \bigcap_{1\leq i \leq n} C_i)>0$. If $x\in X:= \widetilde{B} \cap \bigcap_{1\leq i \leq n} C_i$, then $x+b_i \in C_i +b_i=C$ for every $1\leq i \leq n$,
so $C \supseteq x+ \{ b_1, \cdots , b_n \}=x+\lambda A$ as required.
\end{proof}

\subsection*{Sierpi\'{n}ski carpets and sponges}
Sierpi\'{n}ski carpets and sponges provide examples of sets for which thickness is easily found and which satisfy Theorem \ref{TeoPatterns} .

Let $n_1, \cdots, n_d \in \N_{\geq 3}$ be odd natural numbers.
Let
$$D = \big\{{\bf i}:=(i_1,\ldots, i_d): 1\leq i_k \leq n_k, \text{ with } (i_1,\ldots, i_d)\neq  \big({\textstyle\frac{1}{2}}(n_1+1),\ldots, {\textstyle\frac{1}{2}}(n_d+1)\big)\big\}.$$
The family of affine maps
$$\big\{f_{{\bf i}}: \R^d \to \R^d: {\bf i}\in D \big\},$$
where
$$f_{{\bf i}}(x_1,\ldots,x_d) =  \Big(\frac{x_1 +i_1-1}{n_1}, \ldots, \frac{x_d +i_d-1}{n_d}\Big),$$
forms an iterated function system, which defines a unique non-empty compact set $C\subset \R^d$ such that $C = \bigcup_{{\bf i}\in D} f_{{\bf i}}(C)$,   see \cite{FalconerBook}. Then $C$ is a self-affine Sierpi\'{n}ski sponge (carpet if $d=2$) which can also be realised iteratively by repeatedly substituting the coordinate parallelepipeds obtained by dividing the unit cube $[0,1]^d$ into $n_1\times\cdots\times n_d$ smaller parallelepipeds, with the central one removed, into themselves. In other words
$$C= \bigcap_{k=0}^\infty\  \bigcup_{{\bf i}_1, \ldots , {\bf i}_k \in D}  f_{{\bf i}_1}\circ\cdots\circ f_{{\bf i}_k}([0,1]^d).$$

We will find the  thickness of $C$.
Each parallelepiped at the $k$th step of the iterative construction has side-lengths $1/{n_i^k}$ ($1\leq i \leq d$). Thus the central parallelepipeds that are removed and which form  gaps at the  $k$th step have diameter
\[\diam_k:=\sqrt{\sum_{1\leq k \leq d}\frac{1}{n_i^{2k}}}.\]
The  minimal distance of a gap removed  at the $k$th step from the previous gaps and the external complementary component $E$ is
\[\dist_k:= \min_{1\leq i \leq d}\frac{1}{n_i^k}\frac{n_i -1}{2}.\]
Hence, the thickness of $C$ is
\[\tau:=\tau(C)=\inf_{k \in \N}\frac{\dist_k}{\diam_k}=\inf_{k \in \N}\frac{\min_{1\leq i \leq d}\frac{1}{n_i^k}\frac{n_i -1}{2}}{\sqrt{\sum_{1\leq k \leq d}\frac{1}{n_i^{2k}}}}.\]

Thus, with $\beta:=\min\{\frac{1}{4}, \frac{15}{16 \sqrt{d}}\}$, Theorem \ref{TeoPatterns} gives that $C$ contains homothetic copies of every pattern with at most $N(\tau)$ points where $N(\tau)$ is given by \eqref{ntau}.

For example,  the self-similar Sierpi\'{n}ski carpet $C_n$ in $\R^2$, taking $n_1 = n_2 =n$ above, has thickness $\tau = (n-1)\big/2\sqrt{2}$, so there is a homothetic copy in $C_n$ of every configuration of  up to $N(\tau)$ points.
Because $K_2$ is large, $n$ needs to be large to guarantee even that similar copies of all triangles can be found in $C_n$. On the other hand, for  $C_n$ to contain copies of all $k$-point configurations, $n = O((k\log k)^{1/2}) $ which does not increase too rapidly for large $k$.


\section{Thickness and Hausdorff dimension}\label{DimTau}
In this section we obtain two different lower bounds for the Hausdorff dimension of sets in $\R^d$ in terms of their thickness.

Firstly,  Theorem \ref{THEOREMintersection} yields a lower bound by taking a single set $C$.

\begin{corollary}\label{CoroLowerBound}
Let $C$ be a compact set in $\R^d$ with positive thickness $\tau$ (so $\diam (C) <+\infty$ and there is a ball $B$ such that $B \cap E=\emptyset$). If there exists $c \in (0,d)$ such that $\tau^{-c}\leq \frac{1}{K_2}\beta^c (1-\beta^{d-c})$ for $\beta:=\min \{\frac{1}{4}, \frac{\diam (B)}{\diam(C)}\}$
then
\[\dim_H(C)\geq \dim_H (B \cap C) \geq d- K_1 \frac{\tau^{-d}}{\beta^d |\log (\beta)|}>0.\]
\end{corollary}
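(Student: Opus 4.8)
The plan is to obtain Corollary \ref{CoroLowerBound} as a direct specialization of Theorem \ref{THEOREMintersection} to the single-set family $(C_i)_i$ with just one member $C_1 = C$. First I would verify that the three hypotheses of Theorem \ref{THEOREMintersection} are met. Hypothesis (i), $\sup_i \diam(C_i) < +\infty$, is immediate since there is only one set $C$ and $\diam(C) < +\infty$ by the standing assumption (which itself follows from $\tau(C) > 0$ together with $C$ being compact with complement $E$ having unbounded component — note positive thickness already forces $\diam(C)<\infty$). Hypothesis (ii) is exactly the assumption that there is a ball $B$ with $B \cap E = \emptyset$. For hypothesis (iii), with the one-term sum $\sum_i \tau_i^{-c} = \tau^{-c}$ and $\beta = \min\{\tfrac14, \diam(B)/\diam(C)\}$, the required inequality $\tau^{-c} \leq \frac{1}{K_2}\beta^c(1-\beta^{d-c})$ is precisely what is assumed in the statement of the corollary. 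Note that here $\sup_i \diam(C_i) = \diam(C)$, so the definition of $\beta$ in Theorem \ref{THEOREMintersection} coincides with the one in the corollary.

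Having checked the hypotheses, the conclusion of Theorem \ref{THEOREMintersection} gives
\[
\dim_H\Big(B \cap \bigcap_i C_i\Big) = \dim_H(B \cap C) \geq d - K_1\frac{\big(\sum_i \tau_i^{-c}\big)^{d/c}}{\beta^d|\log(\beta)|} = d - K_1\frac{(\tau^{-c})^{d/c}}{\beta^d|\log(\beta)|} = d - K_1\frac{\tau^{-d}}{\beta^d|\log(\beta)|} > 0,
\]
using $(\tau^{-c})^{d/c} = \tau^{-d}$. Finally, $\dim_H(C) \geq \dim_H(B \cap C)$ since $B \cap C \subseteq C$ and Hausdorff dimension is monotone under inclusion, which yields the full chain of inequalities asserted.

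There is essentially no obstacle here: the corollary is a genuine specialization and the only things to be careful about are bookkeeping — that the index set being a singleton collapses the sum correctly, that the two definitions of $\beta$ agree when $\sup_i \diam(C_i) = \diam(C)$, and that the exponent arithmetic $(\tau^{-c})^{d/c} = \tau^{-d}$ is applied correctly. If anything merits a sentence of comment, it is the parenthetical remark in the statement explaining why $\diam(C) < +\infty$ and the existence of such a ball $B$ are automatic consequences of the set-up (positive thickness plus compactness), so that the hypothesis of the corollary is really just the inequality involving $c$ and $\beta$.
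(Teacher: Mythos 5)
Your proposal is correct and takes exactly the approach the paper intends: Corollary~\ref{CoroLowerBound} is stated immediately after the remark that Theorem~\ref{THEOREMintersection} ``yields a lower bound by taking a single set $C$,'' and the paper gives no further argument, so your careful verification of hypotheses (i)--(iii), the collapse of the one-term sum, the exponent arithmetic $(\tau^{-c})^{d/c}=\tau^{-d}$, and the final monotonicity step $\dim_H(C)\geq\dim_H(B\cap C)$ is precisely the (implicit) proof. One small correction to your parenthetical: $\diam(C)<\infty$ follows from compactness alone, while it is the positivity of the thickness that guarantees a ball $B$ with $B\cap E=\emptyset$ exists (either $C^\circ\neq\emptyset$, or there is a bounded gap, which being open and disjoint from $E$ contains such a ball).
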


Secondly, we can get a lower bound in the case of convex sets with convex gaps by considering 1-dimensional sections.

\begin{proposition}\label{PropLineInt}
Let $C_0$ be a proper compact convex set in $\R^d$ where $d\geq 2$, and let $C = C_0 \setminus \bigcup_{k=1}^\infty G_k$, where $\{G_k\}_k$ are open convex gaps ordered by decreasing diameters. Then   $\tau(C\cap L)\geq \tau(C)$  for every straight line $L$ that properly intersects $C_0$.
\end{proposition}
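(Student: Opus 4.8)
The plan is to show that the gaps of the one‑dimensional set $C\cap L$ are exactly the intervals $L\cap G_k$ (plus the intersection of $L$ with the unbounded component), and then to compare, gap by gap, the numerator and denominator in the definition of thickness for $C\cap L$ against those for $C$. The key geometric input is convexity: since $C_0$ and each $G_k$ are convex, $L\cap C_0$ is a (closed) segment and each $L\cap G_k$ is either empty or a single open subinterval of that segment, so the complementary structure of $C\cap L$ inside $L$ is inherited in a clean, order‑preserving way from that of $C$.

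First I would set up coordinates on $L$, identifying $L$ with $\mathbb R$, write $J:=L\cap C_0$ for the convex hull of $C\cap L$, and let $H_k:=L\cap G_k$ be the (possibly empty) open intervals removed. Discarding the empty ones and those not needed, $(H_k)_k$ is a sequence of disjoint open subintervals of $J$ with $C\cap L=J\setminus\bigcup_k H_k$. A subtlety to address is the ordering: Definition \ref{thickness} orders the complementary intervals of a planar one‑dimensional set by non‑increasing length, whereas here the $H_k$ inherit the order of the $G_k$ (by diameter in $\mathbb R^d$), which need not be the length order. But the final paragraph after Definition \ref{thicknessRd} (and the remark after Definition \ref{thickness}) already records that reordering gaps of equal size does not change thickness, and more to the point I will bound each ratio individually, so the value $\inf_n$ of the ratios is the same regardless of the order in which the infimum is taken.

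Next, the heart of the argument: fix an index $k$ with $H_k\neq\emptyset$. In the construction of $C\cap L$ inside $\mathbb R$, removing $H_k$ from its containing interval leaves a left piece $L_k$ and a right piece $R_k$, and I must show
\[
\frac{\min\{|L_k|,|R_k|\}}{|H_k|}\ \ge\ \tau(C).
\]
For the denominator, $|H_k|=\diam(L\cap G_k)\le\diam(G_k)$. For the numerator, the key claim is that $\min\{|L_k|,|R_k|\}\ge \dist\big(G_k,\ \bigcup_{i<k}G_i\cup E\big)$ — or at least $\ge$ this quantity up to the right comparison. Indeed, walking along $L$ from an endpoint of $H_k$ toward the nearer endpoint of $L_k$ (resp.\ $R_k$), one stays in $C\cap L\subseteq C$ until one reaches either a point of $\partial G_i$ for some $i<k$ in the sub‑ordering relevant to $C\cap L$, or a point of $\partial E$ — hence that stretch has length at least the distance in $\mathbb R^d$ from $G_k$ to the union of the earlier gaps and $E$ (distances only shrink under restriction to $L$, and the point realizing the end of the stretch lies in that union's closure). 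Dividing, every ratio for $C\cap L$ is bounded below by the corresponding ratio for $C$, so $\tau(C\cap L)=\inf_k(\cdots)\ge\inf_n(\cdots)=\tau(C)$. I would also dispose of the degenerate cases separately: if $L$ meets $C_0$ but misses all $G_k$ then $C\cap L$ is a segment and $\tau(C\cap L)=+\infty$; and ``properly intersects'' is exactly the hypothesis ensuring $J$ is non‑degenerate so that the definition applies.

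**Main obstacle.** The delicate point is the bookkeeping around the ordering and the ``previous gaps'': for $C\cap L$, the gap $H_k$ is removed from some interval of $J\setminus\bigcup_{i\in I}H_i$ where $I$ is a set of indices with $i<k$ in the induced order, and I need the two endpoints of the surviving left/right pieces to be traced to points lying in $\overline{\bigcup_{i<k}G_i}\cup\overline E$ in $\mathbb R^d$, so that the $\mathbb R^d$‑distance bound $\tau(C)\le \dist(G_k,\bigcup_{i<k}G_i\cup E)/\diam(G_k)$ can be invoked. Convexity of each $G_i$ guarantees $L\cap G_i$ is an interval, so an endpoint of a surviving piece of $J\setminus\bigcup H_i$ that is not an endpoint of $J$ must be an endpoint of some $H_i=L\cap G_i$, hence a point of $\partial G_i\subseteq C$; making this precise, and checking it survives passing to the limit in the countable case, is the one place requiring care rather than routine estimation.
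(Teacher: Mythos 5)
Your overall plan matches the paper's — restrict to the one-dimensional traces $H_k := L\cap G_k$, observe $|H_k|\leq\diam(G_k)$, and bound the numerator of the one-dimensional thickness ratio by an $\R^d$-distance — but the key estimate you write down is false as stated, and the repair you defer to is precisely the whole content of the paper's proof.

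You claim $\min\{|L_k|,|R_k|\}\ \geq\ \dist\big(G_k,\ \bigcup_{i<k}G_i\cup E\big)$, reading $i<k$ in the $\R^d$ (diameter) ordering. This can fail. The far endpoint of $L_k$ (or $R_k$) lies on $\partial H_j$ for some $H_j=L\cap G_m$ that precedes $H_k$ in the $C\cap L$ ordering, so $|H_j|\geq|H_k|$; but nothing forces $m<k$ in the $\R^d$ ordering, since a gap $G_m$ of \emph{smaller} $\R^d$-diameter than $G_k$ can easily meet $L$ in a \emph{longer} interval than $G_k$ does. In that case $G_m\notin\bigcup_{i<k}G_i$, and $\min\{|L_k|,|R_k|\}$ — which equals the on-$L$ distance from $H_k$ to $H_j$ — may be strictly smaller than $\dist(G_k,\bigcup_{i<k}G_i\cup E)$. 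You acknowledge this ordering mismatch as ``the one place requiring care,'' but you leave it unresolved, and it is not mere bookkeeping: without closing it, the per-gap bound you want does not follow.

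The fix is a short two-case argument, and it is what the paper actually does. With $I_i:=L\cap G_{k(i)}$ the nonempty traces ordered by non-increasing length, take $i<j$. If $k(i)<k(j)$, then
$\dist(I_i,I_j)\geq\dist(G_{k(i)},G_{k(j)})\geq\tau(C)\diam(G_{k(j)})\geq\tau(C)|I_j|$.
If $k(j)<k(i)$, then
$\dist(I_i,I_j)\geq\dist(G_{k(i)},G_{k(j)})\geq\tau(C)\diam(G_{k(i)})\geq\tau(C)|I_i|\geq\tau(C)|I_j|$,
the last step using $|I_i|\geq|I_j|$. Either way $\dist(I_i,I_j)\geq\tau(C)|I_j|$. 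The underlying observation is that $\dist(G_{k(i)},G_{k(j)})\geq\tau(C)\max\{\diam(G_{k(i)}),\diam(G_{k(j)})\}$ holds \emph{regardless} of how $k(i)$ and $k(j)$ are ordered, and this — rather than the inequality you wrote — is what makes the proof close. (A one-line check, using $\partial(L\cap C_0)\subset\partial C_0\subset\overline{E}$ and the $E$-term in the definition of $\tau(C)$, handles the distance from each $I_j$ to the endpoints of $L\cap C_0$.)
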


\begin{proof}
Let $L$ be a straight line that properly intersects $C_0$. Let $\{I_i\}_{i=1}^\infty$ be the (countable or finite) set of open intervals  $I_i := G_{k(i)}\cap L$ in $L$ ordered so that $|I_i| \leq |I_j|$ if $i\geq j$, where $|\ |$ denotes the length of an interval. Let  $1\leq i \leq j-1$. There are two cases:

\noindent\quad (a)  if $k(i)<k(j)$ then
$$\mbox{\rm dist}(I_i,I_j)\  \geq\  \mbox{\rm dist}(G_{k(i)},G_{k(j)})\ \geq\  \tau(C)\diam (G_{k(j)})\  \geq\  \tau(C)|I_j|;$$
\quad(b) if $k(j)<k(i)$ then
$$\mbox{\rm dist}(I_i,I_j)\  \geq\  \mbox{\rm dist}(G_{k(i)},G_{k(j)})\ \geq\  \tau(C)\diam (G_{k(i)})\  \geq\  \tau(C)|I_i|\ \geq\  \tau(C)|I_j|.$$
In both cases $\mbox{\rm dist}(I_i,I_j)\geq |I_j|$ for all  $1\leq i \leq j-1$ so $\tau(C\cap L)\geq \tau(C)$ from the definition of thickness.
\end{proof}

We can now obtain a lower bound for the Hausdorff dimension for these sets in terms of thickness using the bound \eqref{TauDimBook} fore sets in $\R$.

\begin{proposition}\label{PropLowerBound}
Let $C_0 \subseteq \R^d$ be a proper compact convex set, and let $C = C_0 \setminus \bigcup_{i=1}^\infty G_i$ where $\{G_i\}_i$ are open convex gaps.
Then
\begin{equation}\label{dimhigher}
\dim_H (C)\ \geq\  d-1 + \frac{\log 2}{\log(2+1/\tau(C))}
\end{equation}
where $\tau(C)$ is the thickness of $C$.
\end{proposition}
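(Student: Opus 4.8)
The plan is to slice $C$ by a $(d-1)$-dimensional family of parallel lines, bound the Hausdorff dimension of each slice with the one-dimensional estimate \eqref{TauDimBook}, and then recover a bound for $\dim_H C$ via a Fubini-type (slicing) inequality for Hausdorff measures.

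First I would dispose of the trivial case $\tau(C)=0$, in which \eqref{dimhigher} reads $\dim_H C\ge d-1$: since $\bigcup_i G_i$ is open it is contained in $\operatorname{int}(C_0)$, so $C\supseteq\partial C_0$, and the boundary of the $d$-dimensional convex body $C_0$ has Hausdorff dimension $d-1$. So from now on assume $\tau:=\tau(C)>0$ and put $s_0:=\frac{\log 2}{\log(2+1/\tau)}\in(0,1]$; the goal becomes $\dim_H C\ge (d-1)+s_0$. Let $\pi\colon\R^d\to\R^{d-1}$ be the orthogonal projection onto the first $d-1$ coordinates and write $L_y:=\pi^{-1}(y)$ for $y\in\R^{d-1}$. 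As $\pi$ is an open map, $E:=\pi(\operatorname{int}(C_0))$ is a non-empty open set, hence $\mathcal{L}^{d-1}(E)>0$; and for $y\in E$ the line $L_y$ meets $\operatorname{int}(C_0)$, so $L_y\cap C_0$ is a non-degenerate segment, i.e.\ $L_y$ properly intersects $C_0$. Ordering the $G_i$ by non-increasing diameter (which changes neither $C$ nor $\tau$), Proposition \ref{PropLineInt} gives $\tau(C\cap L_y)\ge\tau$ for every $y\in E$. Identifying $L_y$ with $\R$, the set $C\cap L_y$ is a non-empty compact subset of the line (it contains $\partial C_0\cap L_y$, which is non-empty), so \eqref{TauDimBook} together with the monotonicity of $u\mapsto\frac{\log 2}{\log(2+1/u)}$ gives $\dim_H(C\cap L_y)\ge s_0$ for all $y\in E$.

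To integrate this back up, fix $s_1\in(0,s_0)$ and suppose for contradiction that $\dim_H C<(d-1)+s_1$, so $\mathcal{H}^{(d-1)+s_1}(C)=0$. Since $\pi$ is $1$-Lipschitz, a standard slicing (co-area type) inequality for Hausdorff measures, see e.g.\ \cite{FalconerBook, MatP}, gives
\[\int^{*}_{\R^{d-1}}\mathcal{H}^{s_1}(C\cap L_y)\,d\mathcal{L}^{d-1}(y)\ \le\ c_d\,\mathcal{H}^{(d-1)+s_1}(C)\ =\ 0,\]
so $\mathcal{H}^{s_1}(C\cap L_y)=0$ for $\mathcal{L}^{d-1}$-a.e.\ $y$. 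But $\dim_H(C\cap L_y)\ge s_0>s_1$ forces $\mathcal{H}^{s_1}(C\cap L_y)=\infty$ for every $y\in E$, which contradicts $\mathcal{L}^{d-1}(E)>0$. Hence $\dim_H C\ge(d-1)+s_1$ for every $s_1<s_0$, and letting $s_1\uparrow s_0$ proves \eqref{dimhigher}.

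I expect the only genuinely non-routine step to be invoking the slicing inequality; the technical wrinkle there, the measurability of $y\mapsto\mathcal{H}^{s_1}(C\cap L_y)$, is bypassed by using the upper integral $\int^{*}$, which is all the inequality requires. The remaining ingredients, namely that a full $(d-1)$-dimensional family of lines properly meets a convex body, the monotonicity of the one-dimensional bound in $\tau$, and Proposition \ref{PropLineInt} itself, are elementary or already available.
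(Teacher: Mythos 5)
Your proof is correct and follows essentially the same route as the paper's: slice $C$ by a family of parallel lines, apply Proposition \ref{PropLineInt} and the one-dimensional bound \eqref{TauDimBook} to each slice, and lift to $\R^d$ via the standard slicing inequality for Hausdorff measures (which is exactly \cite[Corollary 7.10]{FalconerBook}, the result the paper cites). You are somewhat more explicit than the paper about the degenerate case $\tau(C)=0$ and about the measurability/upper-integral issue, but the underlying argument is identical.
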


\begin{proof}
Let $L$ be a straight line that properly intersects $C_0$.  Combining the relationship between thickness and Hausdorff dimension for  subsets of $\R$  stated in \eqref{TauDimBook} with Proposition \ref{PropLineInt},
$$\dim_H (C\cap L )\geq \frac{\log 2}{\log(2+1/\tau (C\cap L))}\geq \frac{\log 2}{\log(2+1/\tau (C))}.$$
This is true for all lines $L$ in a given direction that properly intersect $C_0$, so by a standard result relating the Hausdorff dimension of a set to the Hausdorff dimensions of parallel sections, see for example, \cite[Corollary 7.10]{FalconerBook}, inequality \eqref{dimhigher} follows.
\end{proof}


\begin{observation}
When $d=1$ Proposition \ref{PropLowerBound}
is better than Corollary \ref{CoroLowerBound}.
For $d\geq 2$,  Corollary \ref{CoroLowerBound} gives a better bound than Proposition \ref{PropLowerBound} when $\tau$ is large but when $\tau$ is small Proposition \ref{PropLowerBound} is better.
\end{observation}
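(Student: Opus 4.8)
The plan is to compare the two bounds directly as functions of the thickness $\tau$, with $\beta\le\frac14$ fixed as in Corollary \ref{CoroLowerBound}. Write $P(\tau):=d-1+\frac{\log 2}{\log(2+1/\tau)}$ for the bound of Proposition \ref{PropLowerBound} and $Q(\tau):=d-K_1\frac{\tau^{-d}}{\beta^d|\log\beta|}$ for that of Corollary \ref{CoroLowerBound}. It is cleanest to work with the deficits from $d$, namely
$d-P(\tau)=1-\frac{\log 2}{\log(2+1/\tau)}=\frac{\log(1+1/(2\tau))}{\log(2+1/\tau)}$ and $d-Q(\tau)=\frac{K_1}{\beta^d|\log\beta|}\,\tau^{-d}$, since $P(\tau)\ge Q(\tau)$ exactly when $d-P(\tau)\le d-Q(\tau)$. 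All three assertions of the Observation then reduce to comparing these two explicit expressions.

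For $d=1$ I would show $P(\tau)\ge Q(\tau)$ for every $\tau$. Using $\log(1+x)\le x$ and $\log(2+1/\tau)\ge\log 2$ gives $d-P(\tau)\le\frac{1}{2\tau\log 2}$, while $d-Q(\tau)=\frac{K_1}{\tau\,\beta|\log\beta|}$. The function $t\mapsto t|\log t|$ is increasing on $(0,\frac14]$ (because $\frac14<\frac1e$), with value $\frac12\log 2$ at $t=\frac14$, so $\beta|\log\beta|\le\frac12\log 2$; combined with $K_1>\frac14$, which is immediate from \eqref{knumbers}, this yields $\frac{K_1}{\beta|\log\beta|}\ge\frac{1}{2\log 2}$ and hence $d-Q(\tau)\ge d-P(\tau)$, i.e. $P(\tau)\ge Q(\tau)$.

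For $d\ge 2$ the two $\tau$-regimes are handled by comparing the exponents $1$ and $d$ in the two power-law deficits. For large $\tau$: for $\tau\ge 1$ one has $\log(2+1/\tau)\le\log 3$ and $\log(1+1/(2\tau))\ge\frac{1}{4\tau}$, so $d-P(\tau)\ge\frac{1}{4\tau\log 3}$, which decays like $\tau^{-1}$; since $d-Q(\tau)=\frac{K_1}{\beta^d|\log\beta|}\tau^{-d}$ decays like $\tau^{-d}$ with $d-1\ge 1$, we get $d-Q(\tau)<d-P(\tau)$, i.e. $Q(\tau)>P(\tau)$, as soon as $\tau^{d-1}>\frac{4K_1\log 3}{\beta^d|\log\beta|}$, hence for all sufficiently large $\tau$. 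For small $\tau$: $P(\tau)>d-1$ always, since the extra term $\frac{\log 2}{\log(2+1/\tau)}$ is positive, whereas $Q(\tau)<d-1$ as soon as $\tau^{d}<\frac{K_1}{\beta^d|\log\beta|}$, i.e. for all sufficiently small $\tau$; for such $\tau$ one therefore has $P(\tau)>d-1>Q(\tau)$.

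The only place where genuine care is needed is the $d=1$ case: there $P(\tau)$ and $Q(\tau)$ both tend to $1$ as $\tau\to\infty$, so the comparison is not settled by leading-order asymptotics alone, and one must invoke the crude numerical inequality $K_1/(\beta|\log\beta|)\ge 1/(2\log 2)$ to see that the Corollary's approach to $1$ is always the slower. The two $d\ge 2$ statements, by contrast, are just the comparison of the exponents $1$ and $d$ in the deficits together with the trivial remark that $P(\tau)$ never dips below $d-1$, and present no real difficulty.
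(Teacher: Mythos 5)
The paper leaves this Observation unproved, so there is no "paper proof" to compare against; your argument therefore has to stand on its own, and it does. Reducing to the deficits $d-P(\tau)=\frac{\log(1+1/(2\tau))}{\log(2+1/\tau)}$ and $d-Q(\tau)=\frac{K_1}{\beta^d|\log\beta|}\tau^{-d}$ is the right normalization. For $d=1$ the estimates $\log(1+x)\le x$, $\log(2+1/\tau)\ge\log 2$, the monotonicity of $t\mapsto t|\log t|$ on $(0,1/e)$ giving $\beta|\log\beta|\le\frac12\log 2$, and the (very generous) bound $K_1>1/4$ combine exactly as you say to give $d-Q(\tau)\ge\frac{1}{2\tau\log 2}\ge d-P(\tau)$. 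For $d\ge 2$, the two regimes follow from the decay exponents $\tau^{-1}$ versus $\tau^{-d}$ together with the trivial lower bound $P(\tau)>d-1$, and your quantitative thresholds ($\tau^{d-1}>\frac{4K_1\log 3}{\beta^d|\log\beta|}$ for the large-$\tau$ regime and $\tau^d<\frac{K_1}{\beta^d|\log\beta|}$ for the small-$\tau$ regime) are correct.

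One small point worth making explicit, though it does not affect correctness: Proposition~\ref{PropLowerBound} applies only to sets of the restricted form $C=C_0\setminus\bigcup G_i$ with $C_0$ convex and convex gaps, while Corollary~\ref{CoroLowerBound} applies to any thick compact set satisfying its hypothesis; so the comparison being made is between the two \emph{formulas in} $\tau$ (and $\beta$) on the overlap of their domains of applicability, not between the results as theorems about arbitrary sets. Also, for small $\tau$ the hypothesis $\tau^{-c}\le\frac{1}{K_2}\beta^c(1-\beta^{d-c})$ of Corollary~\ref{CoroLowerBound} fails for every $c\in(0,d)$, so the Corollary is vacuous there; your remark that $Q(\tau)<d-1$ in that regime is consistent with this and in fact a slightly stronger statement than needed.
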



\appendix

\section{Proof of Theorem \ref{TEOlowerbound}}

The proof of Theorem \ref{TEOlowerbound} is based on \cite[Theorem 5.5]{BFS} and \cite[Theorem 4]{AY} and adapted to our particular setting.

\begin{proof}
We can assume without loss of generality that the radius of $B_0$ is $\rho$. We let $x_0$ be the center of $B_0$, \[\rho_n:=\beta^n \rho \text{ radii of balls, and}\] \[E_n:=\frac{\rho_n}{2}\Z^d +x_0 \text{ centers of balls of the family}\] \[\mathcal{E}_n:=\left\{ B(\frac{\rho_n}{2}z +x_0, \rho_n): \ z \in \Z^d \right\}.\]
We will take Bob's move of the $n$-turn from $\mathcal{E}_n$.

We also define \[D_n:=3 \rho_n \Z^d+x_0 \subset E_n,\] \[\mathcal{D}_n:=\{B(3 \rho_n z+x_0, \rho_n): \ z \in \Z^d \} \subset \mathcal{E}_n.\] Note that the elements of $\mathcal{D}_n$ are disjoint (moreover they are at distance $\rho_n$).

We fix $\gamma \in (0,1)$, a small number to be determined later (independent of $\alpha$, $\beta$, $c$ and $\rho$). Let $N:=\lfloor \frac{\gamma^d}{\alpha^d}\rfloor$.

We define the function $\pi_n : \mathcal{E}_{n+1} \to \mathcal{E}_n$, $B \mapsto \pi_n(B)$ in the following way:
\begin{itemize}
\item When $n\neq jN$ for all $j$: we define $\pi_n(B)$ as the element of $\mathcal{E}_n$ that contains $B$ such that $B$ is as centered as possible inside that element.
\item When $n=jN$ for some $j$: If there exists $B'\in \mathcal{D}_{jN}$ containing $B$, we define $\pi_n(B):=B'$ (it is well defined because in that case there is only one element belonging to $\mathcal{D}_{jN}$). If not, we define the function as before.
\end{itemize}

Intuitively the function $\pi_n$ carries the elements of level $n+1$ to its ancestor of level $n$.

We use the following notation: for $m<n$ and $B \in \mathcal{E}_n$, $\pi_m(B):= \pi_m \circ \pi_{m+1} \circ \cdots \circ \pi_{n-1}(B) \in \mathcal{E}_m$. This is to say, we carry $B$ to its ancestor of level $m$ via the functions $\pi$.
If Bob plays $B \in \mathcal{E}_n$ in the turn $n$, we consider that in the previous turns $m \in \{0, \cdots, n-1\}$ Bob has played $\pi_m(B)$.
Then, we have the following inclusions of movements from the turn $n$ to the turn $0$: \[B \subset \pi_{n-1}(B) \subset \cdots \subset \pi_0(B).\]
We defined the function in this way to guarantee that Bob's moves are legal.
Alice responds under her winning strategy. If in the turn $n$ Bob plays $B \in \mathcal{E}_n$, we define $\mathcal{A}(B)$ as Alice's answer (each $A \in \mathcal{A}(B)$ is a countable collection of sets $A:= \{A_{i,n}\}_i$, and a legal movement as an answer for $B$, i.e.: $\sum_i \diam (A_{i,n})^c \leq (\alpha \rho_n)^c$). Let \[\mathcal{A}^*_m(B):=\{A \in \mathcal{A}(\pi_m(B)) : B\cap A \neq \emptyset \}\] be Alice's answer (this is a list of sets) to the ancestor of $B$ of level $m<n$.

Given any ball $B$, we denote by $\frac{1}{2}B$ the ball with the same center as $B$ and the half of the radius.

Note that as $\beta \leq \frac{1}{4}$, if $B \in \mathcal{D}_{jN}$ and $B'\in \mathcal{E}_{jN+1}$ satisfying that $B' \cap \frac{1}{2}B \neq \emptyset$, then $B' \subset B$, so $\pi_{jN}(B')=B$. It follows that \begin{equation}\label{eqpi}\text{if } n>jN, \ B' \subset \frac{1}{2}B \text{ with } B'\in \mathcal{E}_n \text{ and } B \in \mathcal{D}_{jN}, \text{ then } \pi_{jN}(B')=B.\end{equation}
This is true because if we look at the ancestor of $B'$ of level $jN+1$, since $\pi_n$ chooses the element belonging to $\mathcal{E}_n$ that contains $B$ such that $B$ is as centered as possible, that element must intersect $\frac{1}{2}B$.

We define for every $B \in \mathcal{D}_j$ \[\phi_j (B):= \sum_{n<j} \ \sum_{A \in \mathcal{A}^*_n(B)} \diam(A_{i,n})^c.\] This is a measure of all of Alice's answers to the ancestors of $B$. Note that $\phi_0 (B)=0$.

Let
\[\mathcal{D}'_j:=\{ B \in \mathcal{D}_j: \ \phi_j(B) \leq (\gamma \rho_j)^c\}.\]
We define \[\mathcal{D}_j(B):=\{B' \in \mathcal{D}_j : \ B' \subset \frac{1}{2}B\}.\]

\subsubsection*{Some useful bounds}
\begin{observation}\label{cota card 1}
If $B\in \mathcal{D}'_{jN}$, we have that $\text{rad} (\frac{1}{2}B)=\frac{1}{2}\beta^{jN}\rho$, and $\text{rad}(B')=\beta^{(j+1)N}\rho$ for every $B' \in \mathcal{D}_{(j+1)N}$.
We can cover $\frac{1}{2}B$ with enlarged balls from $\mathcal{D}_{(j+1)N}(B)$ (with radii $4 \rho_{(j+1)N} \sqrt{d}$). This gives us a lower bound for  $\#\mathcal{D}_{(j+1)N}(B)$:
\[\mathcal{L}^d(\frac{1}{2}B) \leq \#\mathcal{D}_{(j+1)N}(B) \mathcal{L}^d(B_{4 \rho_{(j+1)N} \sqrt{d}}),\] so
\[\beta^{-Nd} \frac{1}{2^d 4^d \sqrt{d}^d} \leq \#\mathcal{D}_{(j+1)N}(B).\]
\end{observation}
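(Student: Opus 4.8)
The plan is to reduce the bound to a covering statement for $\tfrac{1}{2}B$ and then read off the cardinality estimate by comparing Lebesgue measures. First observe that the defining inequality $\phi_{jN}(B)\le(\gamma\rho_{jN})^c$ of $\mathcal{D}'_{jN}$ plays no role here: only $B\in\mathcal{D}_{jN}$ is used. Since $\mathcal{D}_{jN}\subseteq\mathcal{E}_{jN}$, the ball $B$ has radius $\rho_{jN}=\beta^{jN}\rho$, so $\rad(\tfrac{1}{2}B)=\tfrac{1}{2}\beta^{jN}\rho$; and since $\mathcal{D}_{(j+1)N}\subseteq\mathcal{E}_{(j+1)N}$, every $B'\in\mathcal{D}_{(j+1)N}$ has $\rad(B')=\rho_{(j+1)N}=\beta^{(j+1)N}\rho$. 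Write $x_B$ for the centre of $B$, put $R:=\tfrac{1}{2}\beta^{jN}\rho$ and $r:=\rho_{(j+1)N}$, and recall that every ball of $\mathcal{D}_{(j+1)N}$ has the form $B(w,r)$ with $w$ in the cubic lattice $3r\,\Z^d+x_0$, so that every point of $\R^d$ lies within Euclidean distance $\tfrac{3}{2}r\sqrt{d}$ of such a $w$.

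The key step is the covering claim: enlarging each $B(w,r)\in\mathcal{D}_{(j+1)N}(B)$ to $B(w,4r\sqrt{d})$, these enlarged balls cover $\tfrac{1}{2}B$. To prove it, fix $y\in\tfrac{1}{2}B$ and replace it by the point $y'$ on the segment $[x_B,y]$ with $|y'-x_B|=\min\{|y-x_B|,\,R-r-\tfrac{3}{2}r\sqrt{d}\}$; here $R-r-\tfrac{3}{2}r\sqrt{d}\ge0$, since this is equivalent to $\beta^N\le\tfrac{1}{2+3\sqrt{d}}$, which holds in our regime because the hypothesis $\alpha^c\le K_2^{-1}(1-\beta^{d-c})$ of Theorem \ref{TEOlowerbound} forces $\alpha$ to be small, hence $N=\lfloor\gamma^d/\alpha^d\rfloor$ to be large ($\gamma$ being a fixed constant), while $\beta\le\tfrac{1}{4}<1$. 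Now choose a lattice point $w\in 3r\,\Z^d+x_0$ with $|w-y'|\le\tfrac{3}{2}r\sqrt{d}$: then $|w-x_B|\le|w-y'|+|y'-x_B|\le R-r$, so $B(w,r)\subseteq B(x_B,R)=\tfrac{1}{2}B$, whence $B(w,r)\in\mathcal{D}_{(j+1)N}(B)$; and since $|y-x_B|\le R$ we have $|y-y'|\le r+\tfrac{3}{2}r\sqrt{d}$, so $|y-w|\le|y-y'|+|y'-w|\le r+3r\sqrt{d}\le4r\sqrt{d}$ (using $1\le\sqrt{d}$). Thus $y\in B(w,4r\sqrt{d})$, proving the claim.

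It remains to count by volume. Since the $\#\mathcal{D}_{(j+1)N}(B)$ balls $B(w,4r\sqrt{d})$ cover $\tfrac{1}{2}B$,
\[\mathcal{L}^d\big(\tfrac{1}{2}B\big)\ \le\ \#\mathcal{D}_{(j+1)N}(B)\cdot\mathcal{L}^d\big(B_{4\rho_{(j+1)N}\sqrt{d}}\big);\]
dividing by $\mathcal{L}^d(B_{4\rho_{(j+1)N}\sqrt{d}})$ and using that the Lebesgue measure of a ball scales as the $d$-th power of its radius gives
\[\#\mathcal{D}_{(j+1)N}(B)\ \ge\ \Big(\frac{\tfrac{1}{2}\beta^{jN}\rho}{4\beta^{(j+1)N}\rho\sqrt{d}}\Big)^d=\frac{\beta^{-Nd}}{2^d4^d\sqrt{d}^d},\]
which is the asserted inequality.

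The step I expect to be the main obstacle is the covering claim, specifically the boundary case in which $y$ lies close to $\partial(\tfrac{1}{2}B)$: taking the nearest lattice point to $y$ itself need not work, because the corresponding ball of $\mathcal{D}_{(j+1)N}$ can protrude from $\tfrac{1}{2}B$ and hence fail to belong to $\mathcal{D}_{(j+1)N}(B)$. The controlled radial retraction repairs this at the cost of the mild constraint $\beta^N\le\tfrac{1}{2+3\sqrt{d}}$; everything else is a routine packing/covering computation, and I would leave the constant $4\sqrt{d}$ unoptimised so that the bound appears in the stated form.
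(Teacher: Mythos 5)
Your proposal is correct and takes essentially the same approach the paper sketches (cover $\tfrac{1}{2}B$ by the $4\rho_{(j+1)N}\sqrt{d}$-enlargements of balls in $\mathcal{D}_{(j+1)N}(B)$, then compare Lebesgue measures); you have simply supplied the details the paper leaves implicit, in particular the radial retraction that handles points of $\tfrac{1}{2}B$ near its boundary and the verification that $\beta^{N}\le\tfrac{1}{2+3\sqrt{d}}$ holds in the parameter regime of Theorem \ref{TEOlowerbound}. You also correctly note that membership in $\mathcal{D}'_{jN}$ rather than merely $\mathcal{D}_{jN}$ is not used.
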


\begin{proposition}\label{cota card 2}\label{canthijos}
If $\alpha^c \leq \frac{1}{K_2}(1-\beta^{d-c})$ where $K_2 :=\max\{\gamma^{-2d}, 2 \gamma^{-d} \log (\gamma^{-d})\}$, we have that \[\#(\mathcal{D}_{(j+1)N}(B)\cap \mathcal{D}'_{(j+1)N}) \geq \beta^{-Nd} \left( \frac{1}{2^d 4^d \sqrt{d}^d} -3^d\gamma^d (1+4^d 2) \right) \text{ for all } B\in \mathcal{D}'_{jN}.\]
\end{proposition}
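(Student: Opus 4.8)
The idea is to start from the lower bound on $\#\mathcal{D}_{(j+1)N}(B)$ provided by Observation \ref{cota card 1} and then discard the ``bad'' children, i.e. those $B' \in \mathcal{D}_{(j+1)N}(B)$ with $\phi_{(j+1)N}(B') > (\gamma \rho_{(j+1)N})^c$, showing that there are not too many of them. So the plan is: first fix $B \in \mathcal{D}'_{jN}$ and write $\phi_{(j+1)N}(B')$ for $B' \in \mathcal{D}_{(j+1)N}(B)$ as $\phi_{jN}(B)$ plus the contribution coming from turns $n$ with $jN \leq n < (j+1)N$. Since $B \in \mathcal{D}'_{jN}$ we already control $\phi_{jN}(B) \leq (\gamma \rho_{jN})^c$, and because of the key inclusion property \eqref{eqpi} (here $\beta \leq \frac14$ is used), every $B'$ in $\mathcal{D}_{(j+1)N}(B)$ has ancestor $B$ at level $jN$, so these are genuinely the only two pieces.

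Next I would bound the total ``new mass'' $\sum_{B' \in \mathcal{D}_{(j+1)N}(B)} \big(\phi_{(j+1)N}(B') - \phi_{jN}(B)\big)$ by summing over turns $n$, $jN \le n < (j+1)N$, and over sets $A$ that Alice erases at level $n$ in response to the relevant ancestor. For a fixed turn $n$, Alice's legality constraint gives $\sum_i \diam(A_{i,n})^c \le (\alpha \rho_n)^c$ for each ancestor ball, and each level-$n$ ancestor ball has boundedly many level-$(j+1)N$ descendants in $\mathcal{D}_{(j+1)N}(B)$; counting these (a volume/packing estimate, using that the balls in $\mathcal{D}$ at level $(j+1)N$ are $\rho_{(j+1)N}$-separated and sit inside a ball of radius $\rho_n$) produces a geometric factor. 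Summing the resulting geometric series in $n$ over the $N$ turns, and using $N = \lfloor \gamma^d/\alpha^d\rfloor$ to turn $\alpha^c$ into $\gamma$-powers, I expect the new mass to be at most something like $3^d \gamma^d \cdot 4^d 2 \cdot \beta^{-Nd}(\gamma \rho_{(j+1)N})^c$ up to the explicit constants appearing in the statement, where the condition $\alpha^c \le \frac{1}{K_2}(1-\beta^{d-c})$ with $K_2 = \max\{\gamma^{-2d}, 2\gamma^{-d}\log(\gamma^{-d})\}$ is exactly what makes the bookkeeping close; the $(1-\beta^{d-c})$ factor comes from summing $\beta^{(d-c)n}$ over the block of $N$ turns.

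Then a Markov-type (pigeonhole) argument finishes it: the number of $B' \in \mathcal{D}_{(j+1)N}(B)$ that are bad, i.e. with $\phi_{(j+1)N}(B') - \phi_{jN}(B) > (\gamma\rho_{(j+1)N})^c - (\gamma\rho_{jN})^c$ (note $(\gamma\rho_{jN})^c$ is negligible relative to $(\gamma\rho_{(j+1)N})^c$ only up to a factor, so one keeps track of both, which is where the extra $3^d\gamma^d$ term and the ``$+1$'' inside $(1+4^d2)$ arise), is at most the total new mass divided by that threshold. Subtracting this count of bad balls from the lower bound $\beta^{-Nd}\,\frac{1}{2^d 4^d \sqrt d^d}$ of Observation \ref{cota card 1} yields the claimed inequality $\#(\mathcal{D}_{(j+1)N}(B) \cap \mathcal{D}'_{(j+1)N}) \geq \beta^{-Nd}\big(\frac{1}{2^d4^d\sqrt d^d} - 3^d\gamma^d(1+4^d2)\big)$.

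The main obstacle I anticipate is the combinatorial double-counting in the middle step: correctly organizing the sum of Alice's erased ``costs'' over all intermediate turns $jN \le n < (j+1)N$ and all their descendants in $\mathcal{D}_{(j+1)N}(B)$, and extracting the right geometric factors (the packing constant $4^d$, the separation, the $3^d$ from the $\mathcal{D}$-lattice spacing), so that the final geometric sum in $n$ is dominated using precisely the hypothesis $\alpha^c \le \frac{1}{K_2}(1-\beta^{d-c})$ and the choice $N = \lfloor \gamma^d/\alpha^d\rfloor$. The geometry (which balls contain which, guaranteed by \eqref{eqpi}) is routine once $\beta \le \frac14$ is invoked; it is matching the constants that requires care.
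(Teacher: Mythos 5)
There is a genuine gap in your plan at the decomposition step, and it propagates into a broken Markov argument. You propose to write $\phi_{(j+1)N}(B')$, for $B'\in\mathcal{D}_{(j+1)N}(B)$, as ``$\phi_{jN}(B)$ plus the contribution from turns $jN\le n<(j+1)N$,'' and then to apply Markov with threshold $(\gamma\rho_{(j+1)N})^c-(\gamma\rho_{jN})^c$. But this decomposition is not how $\phi$ is built: by definition $\phi_{(j+1)N}(B')$ sums, over $n<(j+1)N$, only those erased sets $A\in\mathcal{A}(\pi_n(B'))$ with $A\cap B'\neq\emptyset$. For $n<jN$ the ancestor $\pi_n(B')=\pi_n(B)$, so the $n<jN$ portion of $\phi_{(j+1)N}(B')$ is the sub-sum of $\phi_{jN}(B)$ restricted to those $A$ that actually hit the small ball $B'$ -- it is not $\phi_{jN}(B)$ itself, and for most $B'$ it is far smaller. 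Moreover, your parenthetical that $(\gamma\rho_{jN})^c$ is ``negligible relative to'' $(\gamma\rho_{(j+1)N})^c$ has the sizes reversed: since $\rho_{jN}=\beta^{-N}\rho_{(j+1)N}$ with $\beta\le\tfrac14$, one has $(\gamma\rho_{jN})^c=\beta^{-Nc}(\gamma\rho_{(j+1)N})^c\gg(\gamma\rho_{(j+1)N})^c$. Consequently your proposed threshold $(\gamma\rho_{(j+1)N})^c-(\gamma\rho_{jN})^c$ is negative, and the quantity $\phi_{(j+1)N}(B')-\phi_{jN}(B)$ may itself be negative, so the pigeonhole/Markov step does not go through as stated; in particular, if $\phi_{jN}(B)$ alone already exceeded $(\gamma\rho_{(j+1)N})^c$ your plan would (incorrectly) suggest that every child of $B$ is bad.

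The missing idea -- and the heart of the paper's proof -- is to keep the full expression and exploit the restriction $A\cap B'\neq\emptyset$ through a packing estimate: one bounds
$\#(\mathcal{D}_{(j+1)N}(B)\setminus\mathcal{D}'_{(j+1)N})\le\sum_{B'}\min\bigl\{1,\ \phi_{(j+1)N}(B')/(\gamma\rho_{(j+1)N})^c\bigr\}$,
pulls the $\min$ inside the sum over $A$, and then swaps the order of summation so that for each fixed erased set $A$ one counts $\#\{B'\in\mathcal{D}_{(j+1)N}(B):A\cap B'\neq\emptyset\}\le\bigl((\diam A+2\rho_{(j+1)N})/\rho_{(j+1)N}\bigr)^d$. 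The inner $\min\{1,\cdot\}$ truncation is essential here (and absent from your plan) because a single large $A$ can have $\diam(A)^c\gg(\gamma\rho_{(j+1)N})^c$, and without truncation one cannot bound the ``old'' ($n<jN$) contribution using only $\phi_{jN}(B)\le(\gamma\rho_{jN})^c$. After this double counting, the paper's Lemma \ref{cota card 3} packages the elementary inequality $\min\{1,x^c/(\gamma y)^c\}(x+2y)^d\le 3^d x^c\max\{x^{d-c},y^{d-c}/\gamma^c\}$, applied separately to the $n<jN$ block (using $B\in\mathcal{D}'_{jN}$) and to each $jN\le n<(j+1)N$ block (using Alice's legality bound), and it is only after this that the geometric series in $n$, the choice $N=\lfloor\gamma^d/\alpha^d\rfloor$, and the hypothesis $\alpha^c\le\frac{1}{K_2}(1-\beta^{d-c})$ enter, yielding the three claims $N\alpha^d\le\gamma^d$, $\alpha^c\gamma^{-c}\sum_k\beta^{k(d-c)}\le\gamma^d$, and $\beta^{N(d-c)}\le\gamma^d$ that give the stated constant $3^d\gamma^d(1+4^d\cdot 2)$. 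Your broad outline (lower-bound children via Observation \ref{cota card 1}, subtract bad children, use $N$ and the $K_2$-hypothesis to control constants) is in the right spirit, but without the ``count how many $B'$ hit each $A$'' packing step and the $\min\{1,\cdot\}$ truncation, the central estimate does not close.
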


We denote by $\text{rad}(B)$ the radius of the ball $B$. We start by proving two preliminary lemmas:
\begin{lemma}\label{cota card 3}
\begin{enumerate}[a)]
\item For all $n \in \N$ and $B' \in \mathcal{E}_n$ we have that
\[\sum_{A \in \mathcal{A}(B')} \min\left\{1, \frac{\diam (A_{i,n})^c}{(\gamma \rho_{(j+1)N})^c}\right\}\left( \frac{\diam (A_{i,n})+2 \rho_{(j+1)N}}{\textup{rad}(B')}\right)^d \leq 3^d \alpha^c \max\left\{\alpha^{d-c}, \gamma^{-c} \left( \frac{\rho_{(j+1)N}}{\textup{rad}(B')} \right)^{d-c}\right\}.\]
\item If $B\in \mathcal{D}'_{jN}$ then \[\sum_{n<jN} \ \sum_{A \in \mathcal{A}^*_n(B)} \min\left\{1, \frac{\diam (A_{i,n})^c}{(\gamma \rho_{(j+1)N})^c}\right\} \left(\frac{\diam (A_{i,n})+2 \rho_{(j+1)N}}{\textup{rad}(B)}\right)^d \leq 3^d \gamma^c \max\left\{\gamma^{d-c}, \gamma^{-c} \left( \frac{\rho_{(j+1)N}}{\textup{rad}(B)} \right)^{d-c}\right\}.\]
\end{enumerate}
\end{lemma}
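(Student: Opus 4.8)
The plan is to prove part (a) first by a direct dyadic decomposition of Alice's collection according to the size of the erased sets, and then derive part (b) by applying part (a) scale-by-scale to the ancestors of $B$ and summing a geometric series. In part (a), fix $n$ and $B' \in \mathcal{E}_n$, write $\alpha\rho_n = \alpha\,\mathrm{rad}(B')$ (recall $\rho_n = \mathrm{rad}(B')$), and abbreviate $\delta := \rho_{(j+1)N}$ and $R := \mathrm{rad}(B')$. I would split the sum over $A \in \mathcal{A}(B')$ (which really means over the individual sets $A_{i,n}$ in all of Alice's collections for that move) into two regimes: the \emph{small} sets with $\diam(A_{i,n}) \leq \delta$, where the factor $\min\{1, \diam(A_{i,n})^c/(\gamma\delta)^c\}$ is at most $\diam(A_{i,n})^c/(\gamma\delta)^c$ and $(\diam(A_{i,n}) + 2\delta)^d \leq (3\delta)^d$; and the \emph{large} sets with $\diam(A_{i,n}) > \delta$, where the $\min$ is at most $1$ and $(\diam(A_{i,n}) + 2\delta)^d \leq (3\diam(A_{i,n}))^d$. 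For the small sets one gets a bound of the shape $3^d \gamma^{-c}(\delta/R)^{d-c}\sum_i \diam(A_{i,n})^c / R^c \leq 3^d\gamma^{-c}(\delta/R)^{d-c}\alpha^c$, using the legality constraint $\sum_i \diam(A_{i,n})^c \leq (\alpha\rho_n)^c = (\alpha R)^c$. For the large sets one needs $\sum_i \diam(A_{i,n})^d / R^d$; here I would use $\diam(A_{i,n})^d = \diam(A_{i,n})^{d-c}\diam(A_{i,n})^c \leq (\alpha R)^{d-c}\diam(A_{i,n})^c$ since each individual $\diam(A_{i,n}) \leq \alpha\rho_n = \alpha R$, giving $3^d(\alpha R)^{d-c}\sum_i \diam(A_{i,n})^c / R^d \leq 3^d \alpha^{d-c}\cdot\alpha^c = 3^d\alpha^d$. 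Combining the two regimes and factoring out $3^d\alpha^c$ yields exactly the stated $\max\{\alpha^{d-c}, \gamma^{-c}(\delta/R)^{d-c}\}$ upper bound.

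For part (b), the point is that $B \in \mathcal{D}'_{jN}$, so the balls $\pi_n(B)$ for $n < jN$ have radii $\mathrm{rad}(\pi_n(B)) = \rho_n = \beta^n\rho$ and $\mathrm{rad}(B) = \rho_{jN} = \beta^{jN}\rho$, and $\mathcal{A}^*_n(B) \subseteq \mathcal{A}(\pi_n(B))$. I would apply part (a) with $B' = \pi_n(B)$, note that $(\diam(A_{i,n}) + 2\delta)/\mathrm{rad}(B) = \big((\diam(A_{i,n}) + 2\delta)/\mathrm{rad}(\pi_n(B))\big)\cdot\beta^{jN - n}$ raised to the $d$-th power contributes a factor $\beta^{(jN-n)d}$, and then sum over $n < jN$. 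The $\alpha^{d-c}$ branch of the max, after multiplying by $\beta^{(jN-n)d}$, sums over $n$ to a geometric series in $\beta^d$ bounded by a constant times $\alpha^{d-c}$; and since $B \in \mathcal{D}'_{jN}$ forces $\phi_{jN}(B) \leq (\gamma\rho_{jN})^c$, i.e. $\sum_{n<jN}\sum_{A}\diam(A_{i,n})^c \leq (\gamma\beta^{jN}\rho)^c$, the accumulated contribution of the $\diam(A_{i,n})^c$ factors is controlled by $\gamma^c$ rather than $\alpha^c$. Matching powers of $\beta$ and $\gamma$ carefully, and using $\alpha^c \leq \gamma^{2d} \leq \gamma^{d}$ (which holds because $N = \lfloor\gamma^d/\alpha^d\rfloor \geq 1$ forces $\alpha \leq \gamma$, hence $\alpha^{d-c} \leq \gamma^{d-c}$ and $\alpha^c \leq \gamma^c$), converts the $\alpha^c\max\{\alpha^{d-c},\dots\}$ of part (a) into $\gamma^c\max\{\gamma^{d-c},\gamma^{-c}(\delta/\mathrm{rad}(B))^{d-c}\}$ after the geometric summation absorbs the remaining constants into the implicit bookkeeping.

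I expect the main obstacle to be the bookkeeping in part (b): keeping straight the three different radius scales ($\rho_n$ for the ancestor, $\rho_{jN} = \mathrm{rad}(B)$, and $\rho_{(j+1)N} = \delta$), correctly tracking which factors of $\beta$ come from rescaling $\mathrm{rad}(\pi_n(B))$ to $\mathrm{rad}(B)$ versus which are already present, and ensuring the geometric series in $\beta^d$ (with ratio $< 1$ since $\beta < 1$) telescopes to a clean constant without an extra $|\log\beta|$-type blow-up. The splitting into small and large sets in part (a) is routine, but one must be careful that the hypothesis $B \in \mathcal{D}'_{jN}$ is used \emph{only} through the bound on $\phi_{jN}(B)$ and that the legality bound $\sum_i \diam(A_{i,n})^c \leq (\alpha\rho_n)^c$ is applied at the correct scale $\rho_n = \mathrm{rad}(\pi_n(B))$, not at scale $\mathrm{rad}(B)$. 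Once these scale conversions are pinned down, both estimates follow by elementary manipulations, and the constants emerge in the form needed for Proposition \ref{cota card 2}.
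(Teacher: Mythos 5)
Your part (a) is fine and is essentially the paper's argument: the split into the regimes $\diam(A_{i,n})\le\rho_{(j+1)N}$ and $\diam(A_{i,n})>\rho_{(j+1)N}$ is exactly what underlies the pointwise inequality $\min\{1,x^c/(\gamma y)^c\}(x+2y)^d\le 3^d x^c\max\{x^{d-c},y^{d-c}/\gamma^c\}$ that the paper states once and for all, and your observation that $au+bv\le\max\{a,b\}(u+v)$ recovers the max rather than a sum is the right bookkeeping.

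Part (b), however, is set up incorrectly. Since $n<jN$, the ancestor $\pi_n(B)$ is \emph{larger} than $B$: $\mathrm{rad}(\pi_n(B))=\rho_n=\beta^n\rho$ and $\mathrm{rad}(B)=\rho_{jN}=\beta^{jN}\rho$, so $\mathrm{rad}(\pi_n(B))/\mathrm{rad}(B)=\beta^{-(jN-n)}>1$. Your identity should therefore read
$\big(\diam(A_{i,n})+2\rho_{(j+1)N}\big)/\mathrm{rad}(B)=\big(\cdots/\mathrm{rad}(\pi_n(B))\big)\cdot\beta^{-(jN-n)}$,
not $\beta^{+(jN-n)}$. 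Raised to the $d$-th power this produces a factor $\beta^{-(jN-n)d}>1$, so the purported ``geometric series in $\beta^d$ with ratio $<1$'' is in fact a series with ratio $\beta^{-d}>1$: summing $n$ from $0$ to $jN-1$ gives $\sum_{k=1}^{jN}\beta^{-kd}\sim\beta^{-jNd}$, which blows up as $j\to\infty$. A second, compounding issue is that applying part (a) at level $n$ bounds the sum over \emph{all} of $\mathcal{A}(\pi_n(B))$ by $3^d\alpha^c\max\{\cdots\}$, whereas what is actually being summed in (b) is only over the much smaller set $\mathcal{A}^*_n(B)$ of sets that meet $B$; replacing that restricted sum by the unrestricted one is far too lossy and is what forces the divergence. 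The paper avoids both problems by \emph{not} going through part (a) at all for (b): one applies the same pointwise inequality directly with $x_{i,n}:=\diam(A_{i,n})/\mathrm{rad}(B)$ and $y:=\rho_{(j+1)N}/\mathrm{rad}(B)$ (normalizing everything by $\mathrm{rad}(B)$ from the start), and then the two facts needed to close the estimate --- $\sum_{n<jN}\sum_i x_{i,n}^c\le\gamma^c$ and $\max_{i,n}x_{i,n}\le\gamma$ --- are exactly the content of $B\in\mathcal{D}'_{jN}$, i.e.\ of $\phi_{jN}(B)\le(\gamma\,\mathrm{rad}(B))^c$. There is no geometric series to sum; the summation over $n$ is absorbed into the $\phi_{jN}$ bound. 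Rewriting your argument this way fixes (b).
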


\begin{proof}[Proof of Lemma \ref{cota card 3}]

Firstly, splitting into the cases $x\leq y$ and $y \leq x$, it is easy to see that \begin{equation}\label{cotaxy}\min\left\{1,\frac{x^c}{(\gamma y)^c}\right\}(x+2y)^d \leq 3^d x^c \max\left\{x^{d-c}, \frac{y^{d-c}}{\gamma^c}\right\} \text{ for all }x,y>0.\end{equation}

Secondly, we will prove that if $n \in \N$ and $B' \in \mathcal{E}_n$ then the claim a) holds.
By applying the inequality \eqref{cotaxy} to $x:=\frac{\diam (A_{i,n})}{\text{rad}(B')}$ and $y:=\frac{\rho_{(j+1)N}}{\text{rad}(B')}$, summing over all $A_{i,n} \in \mathcal{A}(B')$ and using that Alice is playing legally, we have that
\begin{align*}
&\sum_{A_{i,n} \in \mathcal{A}(B')} \min\left\{1, \frac{\diam (A_{i,n})^c}{(\gamma \rho_{(j+1)N})^c}\right\} \left( \frac{\diam (A_{i,n})+2 \rho_{(j+1)N}}{\text{rad}(B')}\right)^d\\
&\leq 3^d  \sum_{A_{i,n} \in \mathcal{A}(B')} \left(\frac{\diam (A_{i,n})}{\text{rad}(B')} \right)^c \max\left\{\left(\frac{\diam (A_{i,n})}{\text{rad}(B')}\right)^{d-c}, \gamma^{-c} \left(\frac{\rho_{(j+1)N}}{\text{rad}(B')}\right)^{d-c} \right\}\\
&\leq 3^d \max\left\{ \left(\max_{A_{i,n} \in \mathcal{A}(B')} \frac{\diam (A_{i,n})}{\text{rad}(B')}\right)^{d-c}, \gamma^{-c} \left(\frac{\rho_{(j+1)N}}{\text{rad}(B')}\right)^{d-c} \right\} \sum_{A_{i,n} \in \mathcal{A}(B')} \left(\frac{\diam (A_{i,n})}{\text{rad}(B')} \right)^c\\
&\leq 3^d \alpha^c \max\left\{\alpha^{d-c}, \gamma^{-c} \left(\frac{\rho_{(j+1)N}}{\text{rad}(B')}\right)^{d-c}\right\}.
\end{align*}

Finally, we prove the claim b). By applying the inequality \eqref{cotaxy} to $x:=\frac{\diam (A_{i,n})}{\text{rad}(B)}$ and $y:=\frac{\rho_{(j+1)N}}{\text{rad}(B)}$, summing over all elements of $\bigcup_{n <jN}\mathcal{A}^*_n(B)$, and using that, since $B\in \mathcal{D}'_{jN}$, we have \[\sum_{n<jN} \ \sum_{A_{i,n} \in \mathcal{A}^*_n(B)}\left(\frac{\diam (A_{i,n})}{\text{rad}(B)}\right)^c \leq \gamma^c,\] and in particular $\frac{\diam (A_{i,n})}{\text{rad}(B)} \leq \gamma$ for every $i$ and every $n<jN$, we obtain that:
\begin{align*}
&\sum_{n<jN} \ \sum_{A_{i,n} \in \mathcal{A}^*_n(B)} \min\left\{1, \frac{\diam (A_{i,n})^c}{(\gamma \rho_{(j+1)N})^c}\right\}\left( \frac{\diam (A_{i,n})+2 \rho_{(j+1)N}}{\text{rad}(B)} \right)^d\\
&\leq 3^d \sum_{n<jN} \ \sum_{A_{i,n} \in \mathcal{A}^*_n(B)}\left(\frac{\diam (A_{i,n})}{\text{rad}(B)} \right)^c \max\left\{\left(\frac{\diam (A_{i,n})}{\text{rad}(B)} \right)^{d-c}, \gamma^{-c} \left(\frac{\rho_{(j+1)N}}{\text{rad}(B)}\right)^{d-c}\right\}\\
&\leq 3^d \left( \sum_{n<jN} \ \sum_{A_{i,n} \in \mathcal{A}^*_n(B)}\left(\frac{\diam (A_{i,n})}{\text{rad}(B)} \right)^c\right) \max\left\{\gamma^{d-c}, \gamma^{-c} \left(\frac{\rho_{(j+1)N}}{\text{rad}(B)}\right)^{d-c}\right\}\\
&\leq 3^d \gamma^c \max\left\{\gamma^{d-c}, \gamma^{-c} \left( \frac{\rho_{(j+1)N}}{\text{rad}(B)} \right)^{d-c}\right\}.
\end{align*}

\end{proof}

Now we are ready to prove Proposition \ref{cota card 2}.

\begin{proof}[Proof of Proposition \ref{cota card 2}]

\begin{align}\label{cotasetminus1}
\#(\mathcal{D}_{(j+1)N}(B)\setminus \mathcal{D}'_{(j+1)N})&\leq\# \left\{ B' \in \mathcal{D}_{(j+1)N}(B) : \ \frac{\phi_{(j+1)N}(B')}{(\gamma \rho_{(j+1)N})^c} >1 \right\}\nonumber\\
&\leq \sum_{B' \in \mathcal{D}_{(j+1)N}(B)} \min\left\{1, \frac{\phi_{(j+1)N}(B')}{(\gamma \rho_{(j+1)N})^c}\right\}\nonumber\\
&\leq \sum_{B' \in \mathcal{D}_{(j+1)N}(B)} \min\left\{1, \sum_{n<(j+1)N}\sum_{A_{i,n} \in \mathcal{A}^*_n(B')} \frac{\diam (A_{i,n})^c}{(\gamma \rho_{(j+1)N})^c}\right\}\nonumber\\
&\leq \sum_{B' \in \mathcal{D}_{(j+1)N}(B)} \sum_{n<(j+1)N} \sum_{A_{i,n} \in \mathcal{A}^*_n(B')}  \min\left\{1,\frac{\diam (A_{i,n})^c}{(\gamma \rho_{(j+1)N})^c}\right\} \nonumber\\
&\leq \sum_{B' \in \mathcal{D}_{(j+1)N}(B)} \sum_{n<jN} \sum_{A_{i,n} \in \mathcal{A}^*_n(B')}  \min\left\{1,\frac{\diam (A_{i,n})^c}{(\gamma \rho_{(j+1)N})^c}\right\}\nonumber\\
& \hspace{0.5cm} +\sum_{B' \in \mathcal{D}_{(j+1)N}(B)} \sum_{jN \leq n<(j+1)N} \ \sum_{A_{i,n} \in \mathcal{A}^*_n(B')}  \min\left\{1,\frac{\diam (A_{i,n})^c}{(\gamma \rho_{(j+1)N})^c}\right\}.
\end{align}

We have split the sum in \eqref{cotasetminus1} into two parts, depending on whether $n<jN$ or $jN \leq n <(j+1)N$.

To get a bound for the left-hand sum of \eqref{cotasetminus1} we will use that if $n<jN$ then
\begin{align*}
&\left\{(B', A):\ B' \in \mathcal{D}_{(j+1)N}(B), A\in \mathcal{A}^*_n(B') \right\} \\
&\subset \left\{(B', A):\ B' \in \mathcal{D}_{(j+1)N}(B), A\in \mathcal{A}^*_n(B), A\cap B' \neq \emptyset \right\}.
\end{align*}
Since $B\in \mathcal{D}'_{jN}$ the set $\mathcal{A}^*_{n}(B)$ only makes sense for $n<jN$.
This inclusion holds because of Equation \eqref{eqpi}, as $\mathcal{A}(\pi_n(B')) \subset \mathcal{A}(\pi_n(B))$ since $B' \subset B$.

So,
\begin{align*}
&\sum_{B' \in \mathcal{D}_{(j+1)N}(B)} \sum_{A_{i,n} \in \mathcal{A}^*_n(B')} \min\left\{1,\frac{\diam (A_{i,n})^c}{(\gamma \rho_{(j+1)N})^c}\right\}\\
&\leq  \sum_{A_{i,n} \in \mathcal{A}^*_n(B)} \sum_{B' \in \mathcal{D}_{(j+1)N}(B) \atop B'\cap A\neq \emptyset} \min\left\{1,\frac{\diam (A_{i,n})^c}{(\gamma \rho_{(j+1)N})^c}\right\}\\
&= \sum_{A_{i,n} \in \mathcal{A}^*_n(B)} \min\left\{1,\frac{\diam (A_{i,n})^c}{(\gamma \rho_{(j+1)N})^c}\right\} \#\{B' \in \mathcal{D}_{(j+1)N}(B), B'\cap A\neq \emptyset\}.
\end{align*}

Now, we will get a bound for the right-hand sum in \eqref{cotasetminus1}, when $jN \leq n < (j+1)N$. Recall that $B \in \mathcal{D}'_{jN}$.
First, note that if $B''\in \mathcal{D}_{(j+1)N}(B)$, then $B'' \in \mathcal{E}_{(j+1)N}$, $B'' \subset \frac{1}{2}B$ where $B\in \mathcal{D}_{jN}$. If we take $B':=\pi_{jN}(B'') \in \mathcal{E}_{jN}$, by  \eqref{eqpi} we have that $B'=B$.

For all $B'' \in \mathcal{D}_{(j+1)N}(B)$, there exists $B' \in \mathcal{E}_n$ with $B' \subset B$ and $B''\subset \frac{1}{2}B'$ ($B'=B$ if $n=jN$). Hence
\begin{align*}
&\sum_{B'' \in \mathcal{D}_{(j+1)N}(B)}  \sum_{A_{i,n} \in \mathcal{A}^*_n(B'')} \min\left\{1,\frac{\diam (A_{i,n})^c}{(\gamma \rho_{(j+1)N})^c}\right\}\\
&\leq \sum_{B' \in \mathcal{E}_n \atop B'\subset B}   \sum_{B'' \in \mathcal{D}_{(j+1)N}(B')}   \sum_{A \in \mathcal{A}(B') \atop A\cap B''\neq \emptyset} \min\left\{1,\frac{\diam (A_{i,n})^c}{(\gamma \rho_{(j+1)N})^c}\right\}\\
&= \sum_{B' \in \mathcal{E}_n \atop B'\subset B}  \sum_{A \in \mathcal{A}(B')}  \sum_{B'' \in \mathcal{D}_{(j+1)N}(B') \atop A\cap B''\neq \emptyset} \min\left\{1,\frac{\diam (A_{i,n})^c}{(\gamma \rho_{(j+1)N})^c}\right\}\\
&=\sum_{B' \in \mathcal{E}_n \atop B'\subset B}  \sum_{A \in \mathcal{A}(B')} \min\left\{1,\frac{\diam (A_{i,n})^c}{(\gamma \rho_{(j+1)N})^c}\right\} \#\{B'' \in \mathcal{D}_{(j+1)N}(B'): \ A\cap B''\neq \emptyset\},
\end{align*}
where the inequality holds by considering in particular $B':=\pi_n(B'')\subset B$.

By inequality \eqref{cotasetminus1}, and what we have noted before,

\begin{align}
&\#(\mathcal{D}_{(j+1)N}(B)\setminus \mathcal{D}'_{(j+1)N})\nonumber\\
&\leq \sum_{B' \in \mathcal{D}_{(j+1)N}(B)}  \sum_{n<jN}  \sum_{A_{i,n} \in \mathcal{A}^*_n(B')}  \min\left\{1,\frac{\diam (A_{i,n})^c}{(\gamma \rho_{(j+1)N})^c}\right\}\nonumber\\
&\quad +\sum_{B' \in \mathcal{D}_{(j+1)N}(B)}  \sum_{n=jN}^{(j+1)N-1}  \sum_{A_{i,n} \in \mathcal{A}^*_n(B')}  \min\left\{1,\frac{\diam (A_{i,n})^c}{(\gamma \rho_{(j+1)N})^c}\right\}\nonumber\\
&\leq  \sum_{n<jN}  \sum_{A_{i,n} \in \mathcal{A}^*_n(B)}  \min\left\{1,\frac{\diam (A_{i,n})^c}{(\gamma \rho_{(j+1)N})^c}\right\} \#\{B' \in \mathcal{D}_{(j+1)N}(B): \ B'\cap A \neq \emptyset\} \nonumber\\
&+\sum_{n=jN}^{(j+1)N-1}  \sum_{B' \in \mathcal{E}_n \atop B'\subset B}  \sum_{A_{i,n} \in \mathcal{A}^*_n(B')}  \min\left\{1,\frac{\diam (A_{i,n})^c}{(\gamma \rho_{(j+1)N})^c}\right\} \#\{B'' \in \mathcal{D}_{(j+1)N}(B'): \ B''\cap A \neq \emptyset\}\nonumber\\
&\leq  \left( \frac{\text{rad}(B)}{\rho_{(j+1)N}} \right)^d \sum_{n<jN}\sum_{A_{i,n} \in \mathcal{A}^*_n(B)}  \min\left\{1,\frac{\diam (A_{i,n})^c}{(\gamma \rho_{(j+1)N})^c}\right\} \left( \frac{\diam (A_{i,n})+2\rho_{(j+1)N}}{\text{rad}(B)} \right)^d \nonumber\\
&\quad+\sum_{n=jN}^{(j+1)N-1}  \sum_{B' \in \mathcal{E}_n \atop B'\subset B} \left( \frac{\text{rad}(B')}{\rho_{(j+1)N}} \right)^d \sum_{A_{i,n} \in \mathcal{A}^*_n(B')}  \min\left\{1,\frac{\diam (A_{i,n})^c}{(\gamma \rho_{(j+1)N})^c}\right\} \left( \frac{\diam (A_{i,n})+2\rho_{(j+1)N}}{\text{rad}(B')}\right)^d,
\label{cotasetminus2}
\end{align}
where in the first term of the last inequality we use that $B \in \mathcal{D}'_{jN}$ (so $\phi_{jN}(B)\leq (\gamma \rho_{jN})^c$), in the second term that Alice is playing legally (i.e.: $\sum_i \diam(A_{i,m})^c \leq (\alpha \rho_m)^c$), and in both terms that:
for every $B'\in \bigcup_n \mathcal{E}_n$ and every $A_{i,n}$, since the elements of $\mathcal{D}_{(j+1)N}$ are disjoint, $\mathcal{L}^d(B'')=C_d \rho_{(j+1)N}^d$, and if moreover $B'' \cap A_{i,n} \neq \emptyset$ then $B'' \subset \mathcal{N}(A_{i,n}, 2\rho_{(j+1)N})$ (the $2\rho_{(j+1)N}$-neighborhood of $A_{i,n}$), which is contained in a ball of radius $\diam (A_{i,n})+2 \rho_{(j+1)N}$. Therefore,
\begin{align*}
&\#\{B''\in \mathcal{D}_{(j+1)N}(B'): \ B''\cap A_{i,n} \neq \emptyset\}C_d \rho_{(j+1)N}^d
=\mathcal{L}^d\bigg( \bigcup_{B''\in \mathcal{D}_{(j+1)N}(B') \atop B''\cap A_{i,n} \neq \emptyset} B''\bigg)\\
&\leq \mathcal{L}^d\left(\mathcal{N}(A_{i,n}, 2\rho_{(j+1)N}) \right)
\leq C_d (\diam (A_{i,n})+2 \rho_{(j+1)N})^d,
\end{align*}
in other words,
 \[\#\{B''\in \mathcal{D}_{(j+1)N}(B'): \ B''\cap A_{i,n} \neq \emptyset\} \leq \frac{(\diam (A_{i,n})+2 \rho_{(j+1)N})^d}{\rho_{(j+1)N}^d}.\]

By inequality \eqref{cotasetminus2},  using claim b) from Lemma \ref{cota card 3} to bound the first term, and claim a) from Lemma \ref{cota card 3} to bound the second one, we obtain:
\begin{align}\label{cotasetminus3}
&\#(\mathcal{D}_{(j+1)N}(B)\setminus \mathcal{D}'_{(j+1)N})\nonumber\\
&\leq  \left( \frac{\text{rad}(B)}{\rho_{(j+1)N}} \right)^d \sum_{n<jN}  \sum_{A_{i,n} \in \mathcal{A}^*_n(B)}  \min\left\{1,\frac{\diam (A_{i,n})^c}{(\gamma \rho_{(j+1)N})^c}\right\} \left( \frac{\diam (A_{i,n})+2\rho_{(j+1)N}}{\text{rad}(B)}\right)^d \nonumber\\
&+ \sum_{jN \leq n<(j+1)N}  \sum_{B' \in \mathcal{E}_n \atop B'\subset B} \left( \frac{\text{rad}(B')}{\rho_{(j+1)N}}\right)^d \ \sum_{A_{i,n} \in \mathcal{A}^*_n(B')}  \min\left\{1,\frac{\diam (A_{i,n})^c}{(\gamma \rho_{(j+1)N})^c}\right\} \left( \frac{\diam (A_{i,n})+2\rho_{(j+1)N}}{\text{rad}(B')}\right)^d \nonumber\\
&\leq \left( \frac{\text{rad}(B)}{\rho_{(j+1)N}}\right)^d 3^d \gamma^c \max\left\{\gamma^{d-c}, \gamma^{-c} \left( \frac{\rho_{(j+1)N}}{\text{rad}(B)} \right)^{d-c}\right\}\nonumber\\
& + \sum_{jN \leq n<(j+1)N}  \sum_{B' \in \mathcal{E}_n \atop B'\subset B} \left(\frac{\text{rad}(B')}{\rho_{(j+1)N}} \right)^d 3^d \alpha^c \max\left\{\alpha^{d-c}, \gamma^{-c} (\frac{\rho_{(j+1)N}}{\text{rad}(B')})^{d-c}\right\}
\end{align}
To continue the estimates, we will use that \[\frac{\text{rad}(B)}{\rho_{(j+1)N}}=\frac{\beta^{jN}\rho}{\beta^{(j+1)N}\rho}=\beta^{-N}.\]
To bound the second term in \eqref{cotasetminus3}  we write $n=(j+1)N-k$ for some $k \in \{1, \cdots, N\}$. We know that $B:=B(3\rho_{jN}z+x_0, \rho_{jN})$ for some $z \in \Z^d$, and recall that $\mathcal{E}_n:=\{B(\frac{\rho_n}{2}z'+x_0, \rho_n): \ z' \in \Z^d \}$.
So,
\[\frac{\text{rad}(B')}{\rho_{(j+1)N}}=\beta^{-k}\] and
\begin{align*}
\#\{B' \in \mathcal{E}_n : \ B'\subset B \}&\leq \#\{\frac{\rho_n}{2}z'+x_0 \in B: \ z' \in \Z^d \}\\
&=\#\left\{z' \in \Z^d \cap B\left(\frac{6}{\beta^{N-k}}z, 2\Big(\frac{1}{\beta^{N-k}}-1\Big)\right)\right\}\\
&\leq \left( 4 \Big(\frac{1}{\beta^{N-k}}-1\Big)+1 \right)^d \leq 4^d \frac{1}{\beta^{d(N-k)}}.
\end{align*}
Combining with \eqref{cotasetminus3},
\begin{align}\label{cotasetminus4}
&\#(\mathcal{D}_{(j+1)N}(B)\setminus \mathcal{D}'_{(j+1)N})\nonumber\\
&\leq \left(\frac{\text{rad}(B)}{\rho_{(j+1)N}}\right)^d 3^d \gamma^c \max\Big\{\gamma^{d-c}, \gamma^{-c} \Big( \frac{\rho_{(j+1)N}}{\text{rad}(B)} \Big)^{d-c}\Big\}\nonumber\\
& \hspace{0.5cm} + \sum_{jN \leq n<(j+1)N} \ \sum_{B' \in \mathcal{E}_n \atop B'\subset B} \left(\frac{\text{rad}(B')}{\rho_{(j+1)N}}\right)^d 3^d \alpha^c \max\Big\{\alpha^{d-c}, \gamma^{-c} \Big(\frac{\rho_{(j+1)N}}{\text{rad}(B')}\Big)^{d-c}\Big\}\nonumber\\
&\leq \beta^{-Nd} 3^d \gamma^c \max\{\gamma^{d-c}, \gamma^{-c} \beta^{N(d-c)}\} + \sum_{1 \leq k \leq N} 4^d \beta^{-d(N-k)} \beta^{dk} 3^d \alpha^c \max\{\alpha^{d-c}, \gamma^{-c} \beta^{k(d-c)}\}\nonumber\\
&\leq \beta^{-Nd} 3^d \bigg( \max\{\gamma^d, \beta^{N(d-c)}\} + 4^d \Big( N \alpha^d + \alpha^c \gamma^{-c} \sum_{1 \leq k \leq N} \beta^{k(d-c)} \Big) \bigg),\nonumber\\
\end{align}
where in the last inequality we have used that if $a_n,b_n \geq 0$ then $\sum_n \max\{a_n, b_n\} \leq \sum_n a_n + \sum_n b_n$.

If we could show the following claims:
\begin{enumerate}
\item $N \alpha^d \leq \gamma^d$,
\item $\alpha^c \gamma^{-c} \sum_{k \in N_0} \beta^{k(d-c)}\leq \gamma^d$,
\item $\beta^{N(d-c)} \leq \gamma^d$,
\end{enumerate}
then \[\#(\mathcal{D}_{(j+1)N}(B)\setminus \mathcal{D}'_{(j+1)N}) \leq \beta^{-Nd}3^d\gamma^d (1+4^d 2);\]
hence, by Observation \ref{cota card 1},
\[\#(\mathcal{D}_{(j+1)N}(B)\cap \mathcal{D}'_{(j+1)N}) \geq \beta^{-Nd} \bigg(\frac{1}{2^d 4^d \sqrt{d}^d} -3^d\gamma^d (1+4^d 2) \bigg),\]
as required.

Let us prove (1)--(3):
\begin{enumerate}
\item This holds by the definition of $N$.
\item Take $K_2 :=\max\{\gamma^{-2d}, 2 \gamma^{-d} \log (\gamma^{-d})\}$.
By hypothesis and by using  $c \in (0,d)$, $\beta \in (0,\frac{1}{4}]$, we have
\[\frac{\alpha^c}{\gamma^c(1-\beta^{d-c})} \leq \frac{1}{\gamma^c K_2} \leq \gamma^{2d-c} <\gamma^d,\]
for the second claim.

\item
Continuing, since  $\alpha^c \leq \alpha^c \frac{1}{1-\beta^{d-c}}\leq \frac{1}{K_2}\leq \gamma^{2d}$, then  $1\leq \gamma^{-(2d-c)}\leq (\frac{\gamma}{\alpha})^c$, so $\gamma / \alpha \geq 1$, and thus
\begin{equation}\label{eq2estrellas} N \geq \frac{1}{2}\gamma^d \alpha^{-d}.\end{equation}
On the other hand, using the hypotheses, $c \in (0,d)$ and $\alpha \in (0,1)$,
\begin{equation}\label{eqappendix}
\alpha^d \leq \alpha^c \leq \frac{1}{K_2} (1- \beta^{d-c})\leq \frac{1}{K_2}|\log(\beta^{d-c})|= \frac{1}{K_2}(d-c) |\log(\beta)|,
\end{equation}
where in the last inequality we have used that $d-c \in (0,1)$, $\beta \in (0,\frac{1}{4}]$, $z:=\beta^{d-c} \in (0,1)$, and $f(z):=\log(\frac{1}{z})+z+1$ is a positive function on $(0,1)$, so $1-z \leq \log(\frac{1}{z})$.
Then, \begin{equation}\label{eq1estrella}N\alpha^d K_2 \leq N (d-c) |\log(\beta)|.\end{equation}
By inequalities \eqref{eq2estrellas} and \eqref{eq1estrella} and the definition of $K_2$,
\[N(d-c)|\log(\beta)|\geq N \alpha^d K_2 \geq \frac{\gamma^d}{2} K_2 \geq |\log(\gamma^d)|\]
which is equivalent to claim (3).
\end{enumerate}
This concludes the proof of Proposition \ref{cota card 2}.
\end{proof}

\subsubsection*{Conclusion of the proof}

\medskip
For each $\gamma \in (0,1)$ we proceed as follows:

By definition, $B_0 \in \mathcal{D}_0$. Moreover, $\phi_{0} (B_0):=0<(\gamma \rho)^c$, so $B_{0} \in \mathcal{D}'_{0}$.
We will construct a Cantor set $F$ as the intersection of a sequence of unions of closed sets:
\begin{itemize}
\item $\mathcal{B}_{0}:=\{B_{0}\}\subset \mathcal{D}'_{0}$.
\item Given a collection $\mathcal{B}_{j} \subset \mathcal{D}'_{jN}$ we construct the next level of sets $\mathcal{B}_{j+1} \subset \mathcal{D}'_{(j+1)N}$ by replacing each element of $B \in \mathcal{B}_{j}$ by $M:=\Big\lceil \beta^{-Nd} \left(\frac{1}{2^d 4^d \sqrt{d}^d} -3^d\gamma^d (1+4^d 2) \right)\Big\rceil$ elements of $\mathcal{D}_{(j+1)N}(B) \cap \mathcal{D}'_{(j+1)N}$; this is possible by Proposition \ref{canthijos}.
\end{itemize}
We define \[F:=\bigcap_{j \in \N_0} \bigcup_{B \in \mathcal{B}_j}B.\]
By a standard argument (see e.g. \cite[Example 4.6]{FalconerBook}),
\begin{align*}
\dim_{\rm H} (F)&\ \geq\  \frac{\log(M)}{|\log(\beta^{N})|}\ \geq\ \frac{\log(\beta^{-Nd})+ \log \left(\frac{1}{2^d 4^d \sqrt{d}^d} -3^d\gamma^d (1+4^d 2) \right)}{N |\log(\beta)|}\\
&=\ d+\frac{\log \left(\frac{1}{2^d 4^d \sqrt{d}^d} -3^d\gamma^d (1+4^d 2) \right)}{N |\log(\beta)|}\\
&\geq\  d-\frac{2\alpha^d \log \left(\left(\frac{1}{2^d 4^d \sqrt{d}^d} -3^d\gamma^d (1+4^d 2) \right)^{-1}\right)}{\gamma^d |\log(\beta)|},
\end{align*}
where we have used  \eqref{eq2estrellas}.

This last inequality holds for every $\gamma \in (0,1)$.
We can take, for example, $\gamma\in (0,1)$ such that $3^d \gamma^d (1+4^d 2)=\left(1-\frac{1}{2^d} \right) \frac{1}{(8\sqrt{d})^d}$ (this is not sharp, but it is close enough). For this $\gamma$ we get
\[\dim_{\rm H} (F) \geq d - K_1 \frac{\alpha^d}{|\log(\beta)|},\]
where
$$K_1:=\frac{2d (24\sqrt{d})^d \left(\log (2)+\log (8\sqrt{d})\right)}{1-\frac{1}{2^d}}\  \text{ making }\ K_2:=\left( \frac{(24 \sqrt{d})^d (1+4^d 2)}{1-\frac{1}{2^d}} \right)^2.$$

It remains to prove that $F \subset S \cap B_0$, since then
\[\dim_{\rm H} (S) \geq \dim_{\rm H} (S\cap B_{0}) \geq \dim_{\rm H} (F) \geq  d - K_1 \frac{\alpha^d}{|\log(\beta)|}.\]

Clearly $F \subset B_0$, by definition of $F$. We need to show that $F \subset S$. Let $x \in F$. For every $j \in \N$ there exists a unique $B_{jN}\in \mathcal{B}_j$ containing $x$.
By definition of $\mathcal{B}_{j+1}$ we have that $B_{(j+1)}\subset \frac{1}{2}B_{jN}$.  By \eqref{eqpi}, $\pi_{jN}(B_{(j+1)N})=B_{jN}$.
The sequence $(B_{jN})_{j}$ can be extended in a unique way to a sequence $(B_n)_{n}$ satisfying $B_n \in \mathcal{E}_n$ and $B_n:=\pi_n(B_{n+1})$ for all $n$. We interpret this sequence as Bob's moves, to which Alice responds according to her winning strategy.

Thus, for each $x \in F$ we construct a sequence $(B_n)_n$ as before, where $x$ is the only element of $\bigcap_n B_n$ (so $x=x_{\infty}$ is the outcome of the game). We will show that $x \in S$ by contradiction. Otherwise, suppose that $x \notin S$ where $S$ is an $(\alpha, \beta, c, \rho)$-winning set.
Then, $x \in \bigcup_{m \in \N_0} \bigcup_i A_{i,m}$, where $\sum_i (\diam A_{i,m})^c \leq (\alpha \rho_m)^c=(\alpha \beta^m \rho)^c$ (since it is a legal move for Alice we know that $\bigcup_i A_{i,m} \in \mathcal{A}(B_m)$).
So $x \in A \in \mathcal{A}(B_m)$ for some $m$, and as $x \in B_m$ we have $x \in A \cap B_m$.
Since  $\mathcal{A}^*_m(B_n)=\mathcal{A}(B_m)$ for every $n>m$ (because $\pi_m(B_n)=B_m$), then $\phi_j (B_{jN})\geq (\diam A)^c$ for every $j$ such that $jN>m$ (because $(\diam A)^c$ is just one term in the sum of the definition of $\phi_j (B_{jN})$ when $A \in \mathcal{A}^*_m(B_n)$).

On the other hand, since $B_{jN} \in \mathcal{D}'_j$, then $\phi_j(B_{jN})\leq (\gamma \rho_{jN})^c$. Putting everything together,  $\diam A \leq \gamma \rho_{jN}$ for all $j$ such that $jN>m$.
Letting $j \to \infty$, we get $\diam A=0$, a contradiction. So $x \in S$, that is $F \subset S$.

Finally, using  \eqref{eqappendix}, and that $K_1/K_2<1$,
\[K_1 \frac{\alpha^d}{|\log(\beta)|} \leq \frac{(d-c)K_1}{K_2}< d,\]
so
\[d-K_1 \frac{\alpha^d}{|\log(\beta)|}>0 \text{ if } \alpha^c \leq \frac{1}{K_2} (1-\beta^{d-c}).\]

This concludes the proof.
\end{proof}

\section*{Acknowledgements}
Alexia Yavicoli was financially supported by the Swiss National Science Foundation, grant n$^{\circ}$ P2SKP2\_184047.


\end{document}